\setlist[enumerate,1]{label={\upshape(\arabic*)}}
\setlist[enumerate,2]{label={\upshape(\alph*)}}
\tikzset{blackv/.style={circle,fill=black,inner sep=3pt,outer sep=3pt},
         whitev/.style={circle,fill=white,draw=black,inner sep=3pt,outer sep=3pt},
         blabel/.style={circle,draw=black,inner sep=1.5pt,outer sep=0pt},
         redv/.style={circle,fill=red,inner sep=3pt,outer sep=3pt},
         block/.style={draw,rectangle split,rectangle split horizontal,rectangle split parts=#1},
         symbol/.style={
           draw=none,
           every to/.append style={
             edge node={node [sloped, allow upside down, auto=false]{$#1$}}}}
}
\newcolumntype{C}{>{$}c<{$}}
\newtheorem{theorem}{Theorem}[section]
\newtheorem{theoremi}{Theorem}
\newtheorem{propositioni}[theoremi]{Proposition}
\newtheorem{corollary}[theorem]{Corollary}
\newtheorem*{lemma*}{Lemma}
\newtheorem*{theorem*}{Theorem}
\newtheorem{proposition}[theorem]{Proposition}
\newtheorem{definition-proposition}[theorem]{Definition-Proposition}
\theoremstyle{definition}
\newtheorem{definition}[theorem]{Definition}
\newtheorem{remark}[theorem]{Remark}
\newtheorem{example}[theorem]{Example}
\newtheorem*{ack}{Acknowledgements}
\newtheorem*{conv}{Conventions and notation}
\newcommand{\Ga}{\Gamma}
\newcommand{\de}{\delta}
\newcommand{\dra}{\dashrightarrow}
\newcommand{\ovs}{\overset}
\newcommand{\ci}{\circ}
\newcommand{\co}{\colon}
\newcommand{\sha}{\sharp}
\newcommand{\Si}{\Sigma}
\newcommand{\sfra}{{\mathfrak{s}}}
\newcommand{\Acal}{\mathcal{A}}
\newcommand{\Ccal}{\mathcal{C}}
\newcommand{\Dcal}{\mathcal{D}}
\newcommand{\Pcal}{\mathcal{P}}
\newcommand{\Scal}{\mathcal{S}}
\newcommand{\Ebb}{\mathbb{E}}
\newcommand{\Fbb}{\mathbb{F}}
\newcommand{\Zbb}{\mathbb{Z}}
\newcommand{\op}{\operatorname{op}\nolimits}
\newcommand{\ex}{\operatorname{ex}\nolimits}
\newcommand{\RHom}{\mathbf{R}\strut\kern-.2em\operatorname{Hom}\nolimits}
\newcommand{\Image}{\operatorname{Im}\nolimits}
\newcommand{\Kernel}{\operatorname{Ker}\nolimits}
\newcommand{\Cokernel}{\operatorname{Coker}\nolimits}
\newcommand{\Ab}{\mathcal{A}b}
\renewcommand{\ker}{\Kernel}
\newcommand{\un}{\underline}
\newcommand{\se}{\subseteq}
\newcommand{\opl}{\oplus}
\DeclareMathOperator{\moduleCategory}{\mathsf{mod}} \renewcommand{\mod}{\moduleCategory}
\DeclareMathOperator{\Mod}{\mathsf{Mod}}
\DeclareMathOperator{\defe}{\mathsf{def}}
\DeclareMathOperator{\Hex}{\mathsf{Hex}}
\DeclareMathOperator{\Rel}{\mathsf{Rel}}
\DeclareMathOperator{\PFun}{\mathsf{PFun}}
\DeclareMathOperator{\Serre}{\mathsf{Serre}}
\DeclareMathOperator{\coh}{\mathsf{coh}}
\DeclareMathOperator{\id}{\mathsf{id}}
\DeclareMathOperator{\Id}{\mathsf{Id}}
\newcommand{\iso}{\cong}
\newcommand{\defl}{\twoheadrightarrow}
\newcommand{\sst}[1]{\substack{#1}}
\newcommand{\wch}{\widecheck}
\newcommand{\wha}{\widehat}
\newcommand{\wti}{\widetilde}
\newcommand{\Ecal}{\mathcal{E}}
\DeclareMathOperator{\CEs}{(\mathcal{C},\mathbb{E},\mathfrak{s})}
\numberwithin{equation}{section}
\begin{document}
\title[Relative extriangulated categories arising from half exact functors]{Relative extriangulated categories arising from half exact functors}

\author[A. Sakai]{Arashi Sakai}
\address{Graduate School of Mathematics, Nagoya University, Chikusa-ku, Nagoya. 464-8602, Japan}
\email{m20019b@math.nagoya-u.ac.jp}
\subjclass[2020]{18E05}
\keywords{extriangulated category, relative theory, half exact functor, homological functor}
\begin{abstract}
Relative theories(=closed subfunctors) are considered in exact, triangulated and extriangulated categories by Dr\"{a}xler-Reiten-Smal\o-Solberg-Keller, Beligiannis and Herschend-Liu-Nakaoka, respectively. We give a construction method of closed subfunctors from given half exact functors which contains existing constructions. Moreover, if an extriangulated category has enough projective objects, then every closed subfunctor is obtained by this construction.
\end{abstract}

\maketitle

\tableofcontents

\section{Introduction}\label{sec:1}

Recently, Nakaoka and Palu have introduced the notion of \emph{extriangulated categories} which unifies abelian categories, exact categories and triangulated categories \cite{NP}. An extriangulated category consists of a triple $\CEs$ with an additive category $\Ccal$, a biadditive functor $\Ebb\co\Ccal^{\op}\times\Ccal\to\Ab$ and a realization $\sfra$ which associate elements of $\Ebb(C,A)$ to an equivalence class of sequences of the form $[A\to B\to C]$. There are many studies related to extriangulated categories: Auslander-Reiten theory \cite{INP}, sub-bifunctors of $\Ebb$ \cite{BBGH}, localizations \cite{NOS}, applications to extension-closed subcategories of triangulated categories \cite{Pre} \cite{Jin}, and so on.

The concept of \emph{relative theories} has been initiated in exact categories \cite{DRSSK} and generalized in $n$-exangulated categories \cite{HLN} which contains extriangulated categories in the case of $n=1$. Relative theories are explicated by use of \emph{closed subfunctors}. An additive subfunctor $\Fbb$ of $\Ebb$ is a closed subfunctor if $(\Ccal,\Fbb,\sfra|_\Fbb)$ is an extriangulated category where $\sfra|_\Fbb$ is a restriction of $\sfra$ to $\Fbb$. The notion of \emph{proper classes in extriangulated categories} is introduced by Hu-Zhang-Zhou \cite{HZZ} as a generalization of proper classes of triangles introduced by Beligiannis \cite{Bel}. We show that it is the same notion as relative theories in Appendix A. 

In this paper, we give a new way to construct a closed subfunctor of an extriangulated category. The key notion is a \emph{half exact functor} from an extriangulated category to an abelian category, which coincides with a usual half exact functor and a homological functor in the case we consider exact categories and triangulated categories, respectively. The following statement is a starting point in this paper. 

\begin{propositioni}[= Proposition \ref{prop:left}] \label{prop:a}
Let $\CEs$ be an extriangulated category and $H\colon\Ccal\to\Acal$ a half exact functor to an abelian category $\Acal$. 
Then there exists a unique maximal closed subfunctor $\Fbb$ of $\Ebb$ such that $H$ becomes a right exact functor from $(\Ccal, \Fbb, \sfra|_{\Fbb})$ to $\Acal$. 
\end{propositioni}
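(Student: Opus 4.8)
The plan is to write down $\Fbb$ explicitly as the sub-bifunctor that singles out exactly those extensions whose deflation is carried by $H$ to an epimorphism, and then to verify in turn that this $\Fbb$ is an additive subfunctor, that it is closed, that $H$ is right exact with respect to it, and finally that it contains every competing candidate. Concretely, for $A,C\in\Ccal$ I would set
\[
\Fbb(C,A)=\{\,\delta\in\Ebb(C,A)\mid H(g)\text{ is an epimorphism}\,\},
\]
where $A\xrightarrow{f}B\xrightarrow{g}C$ is any $\sfra$-distinguished realization of $\delta$. Since two realizations of the same $\delta$ differ by an isomorphism $B\iso B'$ compatible with the inflations and deflations, and $H$ sends isomorphisms to isomorphisms, the condition ``$H(g)$ epi'' is independent of the chosen realization, so $\Fbb$ is well defined on objects; equivalently, $\Fbb$-deflations are precisely the deflations $g$ with $H(g)$ epi.

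Next I would check that $\Fbb$ is an additive subfunctor. For the contravariant variable, given $\delta\in\Fbb(C,A)$ and $c\co C'\to C$, I would use the morphism of conflations produced by the pullback, together with the standard conflation $B'\to C'\oplus B\to C$ attached to the pullback square. Applying the half exact functor $H$ gives exactness of $H(B')\to H(C')\oplus H(B)\to H(C)$ at the middle term; the image of $H(B')$ there equals the kernel of $H(C')\oplus H(B)\to H(C)$, which is exactly the pullback of $H(g)$ along $H(c)$ in $\Acal$. Because the pullback of the epimorphism $H(g)$ is again an epimorphism, and $H(g')$ factors as the epimorphism $H(B')\to\ker$ followed by the pullback projection to $H(C')$, it is epi; thus $c^{*}\delta\in\Fbb(C',A)$. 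The covariant variable is easier: for $a\co A\to A'$ the pushout deflation $g''$ satisfies $g''\ci b'=g$ for the comparison map $b'\co B\to B''$, so $H(g'')\ci H(b')=H(g)$ is epi and hence so is $H(g'')$, giving $a_{*}\delta\in\Fbb(C,A')$. The subgroup conditions then follow formally: $0\in\Fbb(C,A)$ from the split conflation, and closure under the Baer sum by writing $\delta_1+\delta_2=\nabla_{*}\Delta^{*}(\delta_1\oplus\delta_2)$, noting that the deflation of $\delta_1\oplus\delta_2$ is $g_1\oplus g_2$ with $H(g_1\oplus g_2)\iso H(g_1)\oplus H(g_2)$ epi, and invoking the naturality just established.

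For closedness I would invoke the standard criterion that an additive subfunctor is closed precisely when its class of deflations is closed under composition. If $g$ and $h$ are $\Fbb$-deflations, then they are deflations with $H(g),H(h)$ epi; their composite is a deflation (deflations compose in any extriangulated category) and $H(h\ci g)=H(h)\ci H(g)$ is epi, so $h\ci g$ is again an $\Fbb$-deflation. Right exactness of $H$ on $(\Ccal,\Fbb,\sfra|_{\Fbb})$ is then immediate, since half exactness supplies exactness at the middle term and the defining property of $\Fbb$ supplies the surjection onto $H(C)$. For maximality, if $\Fbb'$ is any closed subfunctor making $H$ right exact, then every $\delta'\in\Fbb'(C,A)$ has $H$ of its deflation epi, i.e.\ $\delta'\in\Fbb(C,A)$; hence $\Fbb'\se\Fbb$, so $\Fbb$ is the largest such subfunctor and, in particular, the unique maximal one.

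I expect the naturality-under-pullback step to be the only genuine obstacle: it is the single place where half exactness of $H$ is essential, and it requires correctly identifying the kernel coming from the half exact sequence with the pullback of $H(g)$ in $\Acal$ so as to exploit stability of epimorphisms under pullback. Everything else is either formal bookkeeping with the bifunctor axioms or a direct appeal to the deflation-composition criterion for closedness.
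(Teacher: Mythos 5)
Your proposal is correct and follows essentially the same route as the paper: the same pointwise definition of $\Fbb(C,A)=\{\de\mid Hg \text{ epi}\}$, the same key conflation $B'\to C'\opl B\to C$ coming from (the dual of) Proposition~\ref{prop:bicar} for the contravariant variable, the deflation-composition criterion of Proposition~\ref{prop:closed} for closedness, and the same immediate arguments for right exactness and maximality. The only cosmetic difference is the final abelian-category step in the pullback argument: the paper first uses the covariant case to see that the induced sequence $HB'\to HB\opl HC'\to HC\to 0$ is right exact, hence that the square is a pushout in $\Acal$, and then compares cokernels of parallel maps, whereas you identify the kernel of $HC'\opl HB\to HC$ with the pullback of $H(g)$ along $H(c)$ and invoke stability of epimorphisms under pullback --- two equivalent ways of finishing the same argument.
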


This construction contains several existing constructions, see Examples~\ref{ex:projectivize} and \ref{ex:proper}. In \cite{Eno}, closed subfunctors are classified by Serre subcategories of the category of \emph{defects of $\Ebb$-extensions}. We deal with a relation between the above proposition and this classification result via \emph{projectivization functors} defined in Definition~\ref{def:projectivize}. In Proposition~\ref{prop:fer}, we show that every half exact functor vanishing on projective objects defines the same closed subfunctor as the one obtained from a projectivization functor. The following is our main theorem.

\begin{theoremi}[= Theorem \ref{thm:tri}] \label{thm:b}
Let $(\Ccal, \Ebb, \sfra)$ be an extriangulated category which has enough projective objects.
Then there exists a bijective correspondence between the followings.
\begin{enumerate}
  \item Projectivization functors. 
  \item Serre subcategories of the category of defects $\defe\Ebb$.
  \item Closed subfunctors of $\Ebb$.
\end{enumerate}
\end{theoremi}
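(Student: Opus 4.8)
The plan is to pivot through the classification result of \cite{Eno}, which already supplies a bijection between the Serre subcategories of the defect category $\defe\Ebb$ (item (2)) and the closed subfunctors of $\Ebb$ (item (3)); under it a Serre subcategory $\Scal$ corresponds to the subfunctor $\Fbb_\Scal$ collecting precisely those $\Ebb$-extensions whose defect lies in $\Scal$. Taking this as a black box, the real work is to insert projectivization functors (item (1)) into the picture, i.e.\ to produce a bijection between (1) and one of (2), (3) that is compatible with Enomoto's correspondence. I would organize everything around the universal half exact functor $u\co\Ccal\to\defe\Ebb$ and its universal property: since $\Ccal$ has enough projective objects, every half exact functor $H\co\Ccal\to\Acal$ vanishing on projectives factors (uniquely up to natural isomorphism) as $\ov{H}\ci u$ with $\ov{H}\co\defe\Ebb\to\Acal$ exact, and conversely every exact functor out of $\defe\Ebb$ yields such an $H$. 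This is the bridge that lets me translate between functors on $\Ccal$ and exact-category data on $\defe\Ebb$.

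First I would make the two assignments between (1) and (2) explicit. Given a projectivization functor $H$, I send it to the full subcategory $\Scal_H := \{D\in\defe\Ebb : \ov{H}(D)=0\}$; because $\ov{H}$ is exact this is a Serre subcategory, so this defines a map (1)$\to$(2). Conversely, to a Serre subcategory $\Scal\se\defe\Ebb$ I associate the composite $\Ccal\xrightarrow{u}\defe\Ebb\xrightarrow{q_\Scal}\defe\Ebb/\Scal$ of $u$ with the Serre quotient functor; by Definition~\ref{def:projectivize} this composite is a projectivization functor, giving a map (2)$\to$(1). The two round trips are then routine to identify: $\Scal_{q_\Scal\ci u}=\ker q_\Scal=\Scal$ on one side, while on the other side the defining property of a projectivization functor $H$ — that the induced $\ov{H}$ be (equivalent to) a Serre localization — forces $H\equi q_{\Scal_H}\ci u$ up to the natural equivalence identifying $\Acal$ with $\defe\Ebb/\Scal_H$.

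It remains to check that this bijection (1)$\leftrightarrow$(2) is compatible with Enomoto's (2)$\leftrightarrow$(3), so that the three correspondences close up into a commuting triangle. Concretely, I must verify that the closed subfunctor attached to a projectivization functor $H$ by Proposition~\ref{prop:a} (the maximal one making $H$ right exact) coincides with $\Fbb_{\Scal_H}$, the closed subfunctor that Enomoto's map assigns to $\Scal_H$. This is exactly where Proposition~\ref{prop:fer} does the heavy lifting: it guarantees that the closed subfunctor produced by Proposition~\ref{prop:a} from any half exact functor vanishing on projectives depends only on its associated Serre subcategory (equivalently, that $H$ and $q_{\Scal_H}\ci u$ determine the same closed subfunctor), which is precisely the statement that the triangle commutes. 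I would carry out this verification by unwinding both constructions at the level of a single $\Ebb$-extension $\delta\in\Ebb(C,A)$ and its defect $\defe\,\delta$, checking that $\delta\in\Fbb$ if and only if $\defe\,\delta\in\Scal_H$.

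The step I expect to be the main obstacle is this last compatibility, because it requires controlling the interaction between three a priori different gadgets — the maximality property characterizing $\Fbb$ in Proposition~\ref{prop:a}, the Serre-localization property characterizing projectivization functors, and the defect-membership description of $\Fbb_\Scal$ from \cite{Eno}. The subtle point is that Proposition~\ref{prop:a} only asserts right exactness (not exactness) of $H$ on $(\Ccal,\Fbb,\sfra|_{\Fbb})$, whereas the factorization through $\defe\Ebb$ is phrased via exact functors; reconciling these means showing that, for functors vanishing on projectives, the right-exactness condition picks out exactly the extensions killed on defects. Once that equivalence is established the remaining verifications are formal, and the three-way bijection follows by transport along Enomoto's result.
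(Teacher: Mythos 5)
Your proposal is correct and is essentially the paper's own proof: the paper likewise takes Enomoto's bijection between (2) and (3) as a black box (Theorem~\ref{thm:eno}), obtains the bijection between (1) and (2) from the universal property of the functor $\Ga\ci Y_{\Ccal}\co\Ccal\to\defe\Ebb$ (Proposition~\ref{prop:defhex}), and proves commutativity of the triangle by precisely the defect-level check you outline, namely that $\de\in\Ebb_{R}^{H}(C,A)$ if and only if $\wha{H}(\de^{\ast})=0$, which follows by applying the right exact lift $\wti{H}$ to the presentation $\Ccal(-,B)\to\Ccal(-,C)\to\de^{\ast}\to0$ (Proposition~\ref{prop:serel}). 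One small correction: Proposition~\ref{prop:fer} is not what makes the triangle commute --- for a projectivization functor $H$ it is vacuous (such an $H$ is trivially a faithfully exact reduction of itself, so it yields no information), and the paper invokes it only to deduce Corollary~\ref{cor:surj}; the heavy lifting in your argument is done entirely by the unwinding step in your final paragraph, which is exactly the paper's Proposition~\ref{prop:serel}.
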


In particular, every closed subfunctor is obtained from a certain half exact functor through this correspondence (Corollary~\ref{cor:surj}).

\begin{conv}
We always assume that \emph{all subcategories are full and closed under isomorphisms}. Throughout this paper, \emph{$\Ccal$ denotes a skeletally small additive category}. We denote by $\Mod\Ccal$ the category of all right $\Ccal$-modules, that is, contravariant additive functors $\Ccal^{\op}\to\Ab$.
\end{conv}

\section{Preliminaries}\label{sec2}

We collect some basic definitions and facts about extriangulated categories. For more details, see \cite{NP}.

\begin{definition}\label{def:ext}
Let $\Ccal$ be an additive category equipped with a biadditive functor $\Ebb\colon\Ccal^{\op}\times\Ccal\to\Ab$. 
\begin{itemize}
  \item We call an element $\de\in\Ebb(C, A)$ an \emph{$\Ebb$-extension}. For any morphisms $f\colon C'\to C$ and $g\colon A\to A'$ in $\Ccal$, we denote $\Ebb(f, A)(\de)$ and $\Ebb(C, g)(\de)$ by $\de f$ and $ g \de$, respectively. Since $\Ebb$ is a bifunctor, we have $(g\de)f=g(\de f)$, and denote it by $g\de f$. 
  \item A pair $(a,c)$ of morphisms with $a\co A\to A'$ and $c\co C'\to C$ is a \emph{morphism of $\Ebb$-extensions} from $\de\in\Ebb(C,A)$ to $\de'\in\Ebb(C',A')$ if we have $a\de=\de'c$. We denote it by $(a,c)\co\de\to\de'$.
  \item We denote by $\de\opl\de'$ the extension in $\Ebb(C\opl C',A\opl A')$ which corresponds to $(\de,0,0,\de')\in\Ebb(C,A)\opl\Ebb(C,A')\opl\Ebb(C',A)\opl\Ebb(C',A')$ via the natural isomorphism. 
  \item We call $0\in\Ebb(C,A)$ the \emph{split $\Ebb$-extension}.
  \item For any morphism $a\co A\to A'$, we denote a morphism $\Ebb(-,a)\co\Ebb(-,A)\to\Ebb(-,A')$ in $\Mod\Ccal$ by $a_{\ast}$. Dually, we denote a morphism $\Ebb(a,-)\co\Ebb(A',-)\to\Ebb(A,-)$ in $\Mod\Ccal^{\op}$ by $a^{\ast}$. 
  \item Let $\de\in\Ebb(C,A)$ be an $\Ebb$-extension. By the Yoneda lemma, there exists a morphism $\Ccal(-,C)\to\Ebb(-,A)$ in $\Mod\Ccal$, denoted by $\de_{\sha}$. Dually, there exists a morphism $\Ccal(A,-)\to\Ebb(C,-)$ in $\Mod\Ccal^{\op}$, denoted by $\de^{\sha}$. 
\end{itemize}
\end{definition}

\begin{definition}
Let $A$ and $C$ be objects in $\Ccal$. Sequences of morphisms $A\ovs{f}{\to}B\ovs{g}{\to}C$ and $A\ovs{f'}{\to}B'\ovs{g'}{\to}C$ are \emph{equivalent} if there exists an isomorphism $b\co B\to B'$ which makes the following diagram commute.
  \begin{equation}
  \begin{tikzcd}
    A \rar["f"] \dar[equal] & B \rar["g"] \dar["b"] & C \dar[equal] \\
    A \rar["f'"'] & B' \rar["g'"'] & C
  \end{tikzcd}
  \end{equation}
We denote by $[A\ovs{f}{\to}B\ovs{g}{\to}C]$ the equivalence class to which $A\ovs{f}{\to}B\ovs{g}{\to}C$ belongs.  
\end{definition}

\begin{definition}
Let $\sfra$ be a map from $\Ebb(C,A)$ to the set of equivalence classes of sequences of the form $[A\to B\to C]$ for any $A,C\in\Ccal$.
\begin{itemize}
  \item We call $[A\ovs{f}{\to} B\ovs{g}{\to} C]$ a \emph{realization} of $\de\in\Ebb(C, A)$ if we have $\sfra(\de)=[A\ovs{f}{\to} B\ovs{g}{\to} C]$.
  \item We call a morphism $f\co A\to B$ an \emph{$\sfra$-inflation} if there is an $\Ebb$-extension $\de\in\Ebb(C,A)$ such that $\sfra(\de)=[A\ovs{f}{\to} B\to C]$. Dually, we call a morphism $g\co B\to C$ an \emph{$\sfra$-deflation} if there is an $\Ebb$-extension $\de\in\Ebb(C,A)$ such that $\sfra(\de)=[A{\to} B\ovs{g}{\to} C]$.
  \item We denote by $A\ovs{f}{\to} B\ovs{g}{\to} C\ovs{\de}{\dra}$ a pair of $\de\in\Ebb(C,A)$ and a sequence $A\ovs{f}{\to}B\ovs{g}{\to}C$ which satisfies $\sfra(\de)=[A\ovs{f}{\to} B\ovs{g}{\to} C]$, and call it an \emph{$\sfra$-triangle}.
  \item We call a triple of morphisms $(a,b,c)$ a \emph{morphism of $\sfra$-triangles} if it satisfies $a\de=\de'c$ and makes the following diagram commutative. 
  \begin{equation}\label{morphtri}
  \begin{tikzcd}
    A \rar["f"] \dar["a"'] & B \rar["g"] \dar["b"] & C \rar[dashed, "\de"]\dar["c"] & \  \\
    A' \rar["f'"'] & B' \rar["g'"'] & C' \rar[dashed, "\de'"] & \ 
  \end{tikzcd}
  \end{equation}
\end{itemize}  
\end{definition}

\begin{definition}\label{def:real}
We say $\sfra$ is an \emph{additive realization of $\Ebb$} if it satisfies the following conditions.
\begin{itemize}
\item A morphism of $\Ebb$-extensions induces a morphism of $\sfra$-triangles, that is, for any morphism $(a,c)\co\de\to\de'$ with $\de\in\Ebb(C,A), \de'\in\Ebb(C',A')$, we have a morphism of $\sfra$-triangles as $(\ref{morphtri})$.
\item For any split $\Ebb$-extension $0\in\Ebb(C,A)$, we have $\sfra(0)=[A\ovs{\bigl(\substack{1 \\ 0}\bigr)}{\to}A\opl C\ovs{(01)}{\to} C]$. 
\item Let $\de\in\Ebb(C,A)$ and $\de'\in\Ebb(C',A')$ be any $\Ebb$-extensions. For any $\sfra$-triangles $A\ovs{f}{\to} B\ovs{g}{\to} C\ovs{\de}{\dra}$ and $A'\ovs{f}{\to} B'\ovs{g'}{\to} C'\ovs{\de'}{\dra}$, we have 
  \[
   \sfra(\de\opl\de')=[A\opl A'\ovs{f\opl f'}{\to} B\opl B'\ovs{g\opl g'}{\to} C\opl C'].
  \]
\end{itemize} 
\end{definition}

\begin{definition}$($\cite[Definition 2.12]{NP}$)$
An \emph{extriangulated category} is a triplet $\CEs$ where
\begin{itemize}
  \item $\Ccal$ is an additive category
  \item $\Ebb\colon\Ccal^{\op}\times\Ccal\to\Ab$ is a biadditive functor
  \item $\sfra$ is an additive realization
\end{itemize}
satisfying the following conditions {\rm (ET3), (ET4)} and their duals ${\rm (ET3)}^{\op}, {\rm (ET4)}^{\op}$. 
\begin{itemize}
  \item[{\rm (ET3)}]
  For any diagram
  \[
  \begin{tikzcd}
    A \rar["f"] \dar["a"'] & B \rar["g"] \dar["b"] & C \rar[dashed, "\de"] & \  \\
    A' \rar["f'"'] & B' \rar["g'"'] & C' \rar[dashed, "\de'"] & \ 
  \end{tikzcd}
  \]
  where both rows are $\sfra$-triangles, and satisfying $bf=f'a$, there exists a morphism $c\co C\to C'$ such that $(a,b,c)$ gives a morphism of $\sfra$-triangles as $(\ref{morphtri})$.
  \item[{\rm (ET4)}]
  Let $A\ovs{f}{\to} B\ovs{f'}{\to} D\ovs{\de}{\dra}$ and $B\ovs{g}{\to} C\ovs{g'}{\to} F\ovs{\rho}{\dra}$ be $\sfra$-triangles.  Then there exists a commutative diagram
  \[
  \begin{tikzcd}
    A \rar["f"] \dar[equal] & B \rar["f'"] \dar["g"] & D \rar[dashed, "\de"]\dar["d"] & \  \\
    A \rar["gf"] & C \rar["h'"]\dar["g'"] & E \rar[dashed, "\de'"]\dar["e"] & \ \\
     \ & F\rar[equal]\dar[dashed,"\rho"] & F\dar[dashed,"f'\rho"] & \ \\
     \ & \ & \ & \ 
  \end{tikzcd}
  \]
where $A\ovs{gf}{\to} C\ovs{h'}{\to} E\ovs{\de'}{\dra}$ and $D\ovs{d}{\to} E\ovs{e}{\to} F\ovs{f'\rho}{\dra}$ are $\sfra$-triangles and $\de'd=\de$ and $\rho e=f\de'$ hold.  
\end{itemize}
\end{definition}

The followings are basic properties of extriangulated categories.

\begin{proposition}$($\cite[Corollary 3.12]{NP}\label{prop:exactseq}$)$
Let $\CEs$ be an extriangulated category. 
\begin{enumerate}
  \item For any $\sfra$-triangle $A\ovs{f}{\to} B\ovs{g}{\to} C\ovs{\de}{\dra}$, we obtain an exact sequence
    \[
    \begin{tikzcd}
       \Ccal(-,A)\rar["f\ci-"]& \Ccal(-,B) \rar["g\ci-"] & \Ccal(-,C) \rar["\de_{\sha}"] & \Ebb(-,A)\rar["f_{\ast}"] & \Ebb(-,B)\rar["g_{\ast}"] & \Ebb(-,C)
    \end{tikzcd}
    \]
    in $\Mod\Ccal$.
  \item For any $\sfra$-triangle $A\ovs{f}{\to} B\ovs{g}{\to} C\ovs{\de}{\dra}$, we obtain an exact sequence 
    \[
    \begin{tikzcd}
       \Ccal(C,-)\rar["-\ci g"]& \Ccal(B,-) \rar["-\ci f"] & \Ccal(A,-) \rar["\de^{\sha}"] & \Ebb(C,-)\rar["g^{\ast}"] & \Ebb(B,-)\rar["f^{\ast}"] & \Ebb(A,-)
    \end{tikzcd}
    \]
    in $\Mod\Ccal^{\op}$.
\end{enumerate}
\end{proposition}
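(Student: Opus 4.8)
The two assertions are dual: statement (2) for $\CEs$ is exactly statement (1) for the opposite extriangulated category $(\Ccal^{\op},\Ebb^{\op},\sfra^{\op})$ applied to the $\sfra^{\op}$-triangle coming from $A\ovs{f}{\to}B\ovs{g}{\to}C\ovs{\de}{\dra}$. So the plan is to prove only (1). Evaluating the displayed natural transformations at an arbitrary $X\in\Ccal$, I reduce to checking exactness of a six-term sequence of abelian groups at its four interior positions $\Ccal(X,B)$, $\Ccal(X,C)$, $\Ebb(X,A)$, $\Ebb(X,B)$. First I would record the elementary relations $gf=0$, $f_{\ast}\de=0$ and $g^{\ast}\de=0$, which are the standard first consequences of the axioms, obtained from the compatibility of $\sfra$ with morphisms of $\Ebb$-extensions and from (ET3) by comparison with split $\sfra$-triangles. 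Biadditivity of $\Ebb$ then upgrades these to the vanishing of consecutive composites (for instance $f_{\ast}(\de h)=(f_{\ast}\de)h=0$ and $g_{\ast}f_{\ast}=(gf)_{\ast}=0$), giving the inclusions $\im\se\ker$ at all four spots; the content lies in the reverse inclusions.

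For exactness at $\Ccal(X,B)$, given $u\co X\to B$ with $gu=0$, I would apply ${\rm (ET3)}^{\op}$ to the split $\sfra$-triangle $X\ovs{1_X}{\to}X\to 0\ovs{0}{\dra}$ on top and the given $\sfra$-triangle on the bottom, with middle map $u$ and right map $0\co 0\to C$; the equality $gu=0$ is exactly the commutativity required, and the resulting left map $w\co X\to A$ satisfies $fw=u$. For exactness at $\Ccal(X,C)$, given $h\co X\to C$ with $\de h=0$, the morphism of $\Ebb$-extensions $(1_A,h)\co\de h\to\de$ induces a morphism of $\sfra$-triangles out of a realization of the split extension $\de h$, which I take to be $A\ovs{\binom{1}{0}}{\to}A\opl X\ovs{(0\,1)}{\to}X$, into the given triangle. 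Precomposing the middle map with the inclusion $\binom{0}{1}\co X\to A\opl X$ produces $v\co X\to B$ with $gv=h$.

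The two $\Ebb$-level positions carry the real content. For exactness at $\Ebb(X,A)$, take $\rho\in\Ebb(X,A)$ with $f_{\ast}\rho=0$ and realize it as $A\ovs{p}{\to}Y\ovs{q}{\to}X\ovs{\rho}{\dra}$. The morphism of extensions $(f,1_X)\co\rho\to f\rho=0$ induces a morphism of $\sfra$-triangles into the split realization of $f\rho$, and from its left square I extract $y_1\co Y\to B$ with $y_1 p=f$. Applying (ET3) to the $\rho$-triangle and the $\de$-triangle with left map $1_A$ and middle map $y_1$ yields $c\co X\to C$ with $\rho=\de c=\de_{\sha}(c)$. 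For exactness at $\Ebb(X,B)$, take $\sigma\in\Ebb(X,B)$ with $g_{\ast}\sigma=0$, realize it as $B\ovs{p'}{\to}W\ovs{q'}{\to}X\ovs{\sigma}{\dra}$, and apply (ET4) to the composable inflations $A\ovs{f}{\to}B$ and $B\ovs{p'}{\to}W$ arising from the $\de$- and $\sigma$-triangles. The octahedral diagram produces an $\sfra$-triangle $C\ovs{d}{\to}E\ovs{e}{\to}X\ovs{g_{\ast}\sigma}{\dra}$, whose extension is $g_{\ast}\sigma=0$ and hence split, together with $\de'\in\Ebb(E,A)$ satisfying $\de' d=\de$ and $f_{\ast}\de'=\sigma e$. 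Choosing a section $s$ of $e$ and setting $\tau:=\de' s$ gives $f_{\ast}\tau=(f_{\ast}\de')s=(\sigma e)s=\sigma$, as needed.

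The main obstacle is the exactness at $\Ebb(X,B)$: it is the only step that genuinely invokes the octahedral axiom (ET4), and the delicate point is to keep track of \emph{which} of the $\sfra$-triangles produced by (ET4) carries the split extension $g_{\ast}\sigma$, so that the section $s$ of $e$ is actually available. By the duality of the first paragraph, the analogous step for sequence (2) uses ${\rm (ET4)}^{\op}$. All remaining positions reduce, as above, to a single use of (ET3), its dual, or the compatibility of $\sfra$ with morphisms of $\Ebb$-extensions, so once the two $\Ebb$-level arguments are in place the proposition follows.
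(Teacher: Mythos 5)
Your proposal is correct: the paper itself gives no proof of this statement, citing it as \cite[Corollary 3.12]{NP}, and your argument reproduces the standard one from that reference --- (ET3)$^{\op}$ against split $\sfra$-triangles for exactness at $\Ccal(X,B)$, the morphism-of-extensions/realization compatibility for the positions $\Ccal(X,C)$ and $\Ebb(X,A)$, and (ET4) for $\Ebb(X,B)$. In particular your handling of the delicate step is right: in your instantiation of (ET4) the split extension $g_{\ast}\sigma=0$ indeed sits on the triangle $C\ovs{d}{\to}E\ovs{e}{\to}X$, so $e$ admits a section $s$ and $f_{\ast}(\de's)=(\sigma e)s=\sigma$ closes the argument.
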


\begin{proposition}$($\cite[Proposition 1.20]{LN}\label{prop:bicar}$)$
Let $A\ovs{f}{\to} B\ovs{g}{\to} C\ovs{\de}{\dra}$ be an $\sfra$-triangle and $a\colon A\to A'$ a morphism in $\Ccal$. 
For any $\sfra$-triangle $A'\ovs{f'}{\to} B'\ovs{g'}{\to} C\ovs{a\de}{\dra}$, there exists a morphism of $\sfra$-triangles
  \[
  \begin{tikzcd}
    A \rar["f"] \dar["a"'] \ar[rd, phantom, "{\rm (*)}"] & B \rar["g"] \dar["b"] & C \rar[dashed, "\de"]\dar[equal] & \  \\
    A' \rar["f'"'] & B' \rar["g'"] & C \rar[dashed, "a\de"] & \ 
  \end{tikzcd}
  \]
such that
  \[
  \begin{tikzcd}
    A \rar["\bigl(\substack{a \\ f}\bigr)"] & A'\opl B \rar["(-f' b)"] & B' \rar[dashed, "\de g'"]& \ 
  \end{tikzcd}
  \]
is an $\sfra$-triangle.  
In particular, $(*)$ is a weak pullback and weak pushout diagram.       
\end{proposition}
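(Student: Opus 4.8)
The plan is to split the statement into three stages: construct the morphism $b$ (and hence the square $(*)$); prove that the lower sequence is an $\sfra$-triangle realizing $\de g'$; and finally read off the weak pullback/pushout property, which I expect to be purely formal once the $\sfra$-triangle is available.

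Constructing $b$ is immediate. The pair $(a,1_C)$ is a morphism of $\Ebb$-extensions $\de\to a\de$, since $a\de=(a\de)\cdot 1_C$. By the first clause of the definition of an additive realization (Definition~\ref{def:real}), this lifts to a morphism of $\sfra$-triangles, producing $b\co B\to B'$ with $bf=f'a$ and $g'b=g$; this is exactly the square $(*)$. I record the resulting identities $(-f'\ b)\binom{a}{f}=-f'a+bf=0$ and $g'\cdot(-f'\ b)=(0\ g)$, which are the compatibilities one expects of a realization of $\de g'\in\Ebb(B',A)$.

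For the lower sequence I would fix an \emph{arbitrary} realization $A\xrightarrow{p}X\xrightarrow{q}B'\ovs{\de g'}{\dra}$ of $\de g'$ and then identify it with the asserted one. The equalities $1_A\cdot\de g'=\de\cdot g'$ and $a\cdot\de g'=(a\de)\cdot g'$ exhibit $(1_A,g')\co\de g'\to\de$ and $(a,g')\co\de g'\to a\de$ as morphisms of $\Ebb$-extensions; lifting the first along the realization axiom, with $\de$ realized by (i), gives
\[
\begin{tikzcd}
A \rar["p"] \dar[equal] & X \rar["q"] \dar["w"] & B' \rar[dashed, "\de g'"] \dar["g'"] & \ \\
A \rar["f"'] & B \rar["g"'] & C \rar[dashed, "\de"] & \
\end{tikzcd}
\]
so that $wp=f$ and $gw=g'q$, while lifting the second, with $a\de$ realized by (ii), gives $v\co X\to B'$ with $vp=f'a$ and $g'v=g'q$. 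From $g'(q-v)=0$ and Proposition~\ref{prop:exactseq}(1) applied to (ii) I obtain $s\co X\to A'$ with $f's=q-v$. The maps $w$ and $s$ are meant to assemble into an isomorphism $X\iso A'\opl B$ carrying $p,q$ to $\binom{a}{f},(-f'\ b)$; since being an $\sfra$-triangle is strictly stronger than exactness of the represented sequences, the clean way to make this rigorous is to first build the homotopy-cartesian square attached to $(a,b,1_C)$ directly from the $3\times 3$ diagram furnished by {\rm (ET4)} and its dual, thereby obtaining an $\sfra$-triangle with middle term $A'\opl B$, and only then to use $w,v,s$ together with Proposition~\ref{prop:exactseq} and the five lemma to pin down that this $\sfra$-triangle is precisely the one asserted.

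The hard part will be exactly this coherent identification. The auxiliary maps $s$, $v$, $w$ are each determined only up to the kernel or image of $f'\ci-$, so matching the precise morphisms $\binom{a}{f}$ and $(-f'\ b)$, the exact extension $\de g'$, and all signs forces one to exploit the relations $wp=f$, $g'v=g'q$, $qp=0$ and $g'b=g$ simultaneously rather than one at a time; this is the step at which {\rm (ET3)} and {\rm (ET4)} do the real work and where the argument must follow the standard construction of the cited references. Once the lower row is known to be an $\sfra$-triangle, the final assertion is automatic: applying Proposition~\ref{prop:exactseq}(1) and (2) to it and using the biproduct splittings $\Ccal(T,A'\opl B)\iso\Ccal(T,A')\opl\Ccal(T,B)$ and $\Ccal(A'\opl B,T)\iso\Ccal(A',T)\opl\Ccal(B,T)$ turns the two long exact sequences into exactly the statements that $(*)$ is a weak pullback and a weak pushout.
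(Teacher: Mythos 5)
The paper never proves this statement: Proposition~\ref{prop:bicar} is quoted without proof from \cite[Proposition 1.20]{LN}, so your attempt can only be measured against the standard argument in the literature (whose main tool, the shifted octahedron, is in fact reproduced in this paper's appendix as Proposition~\ref{prop:shifted}). Judged this way, your outline handles the two routine ends correctly --- lifting the morphism of $\Ebb$-extensions $(a,1_C)\co\de\to a\de$ to get a square $(*)$, and, \emph{once} the sequence $A\to A'\opl B\to B'\ovs{\de g'}{\dra}$ is known to be an $\sfra$-triangle, deducing the weak pullback/pushout property from Proposition~\ref{prop:exactseq} and the biproduct splittings --- but the middle step, which is the entire content of the proposition, is never established. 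You say explicitly that this step ``must follow the standard construction of the cited references''; that is not a proof but a restatement of the claim. There are two concrete reasons your plan cannot be completed as written. First, the quantifiers are in the wrong order: the proposition asserts that \emph{some} good $b$ exists, whereas you fix an arbitrary lift $b$ at the outset and must then verify it is good. Your own computations show where this stalls: from a chosen realization $A\ovs{p}{\to}X\ovs{q}{\to}B'$ of $\de g'$ you only get relations such as $f'(sp+a)=0$, $g'(bw-v)=0$ and $(bw-v)p=0$, and no axiom upgrades these to the on-the-nose identities $sp=-a$, $bw=v$ needed to assemble an equivalence of sequences. Second, the five-lemma fallback is circular: the five lemma needs exactness of the Hom-sequences of \emph{both} rows, and exactness for the row in question is available only after it is known to be an $\sfra$-triangle.

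For comparison, here is how the argument actually runs, using only tools already quoted in this paper. Apply Proposition~\ref{prop:shifted} to the two $\sfra$-triangles ending at $C$, namely $\de$ and $a\de$: it yields $\sfra$-triangles $A\ovs{m_1}{\to}M\ovs{e_1}{\to}B'\ovs{\de g'}{\dra}$ and $A'\ovs{m_2}{\to}M\ovs{e_2}{\to}B\ovs{(a\de)g}{\dra}$ with $e_2m_1=f$, $e_1m_2=f'$, $g'e_1=ge_2$ and $m_1\de+m_2(a\de)=0$. By Proposition~\ref{prop:exactseq} one has $\de g=0$, hence $(a\de)g=a(\de g)=0$, so the second triangle is split and one may take $M=A'\opl B$ with $m_2=\bigl(\substack{1\\0}\bigr)$, $e_2=(0\ 1)$; then $m_1=\bigl(\substack{s\\f}\bigr)$ and $e_1=(f'\ t)$ for some $s,t$. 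The relation $m_1\de+m_2(a\de)=0$ gives $(s+a)\de=0$, so $s+a=hf$ for some $h\co B\to A'$ by Proposition~\ref{prop:exactseq} again. Correcting by the automorphism $\begin{sbmatrix}-1 & h\\ 0 & 1\end{sbmatrix}$ of $A'\opl B$ (which fixes the equivalence class of the realization) turns $m_1$ into $\bigl(\substack{a\\f}\bigr)$ and $e_1$ into $(-f'\ b)$ with $b:=t+f'h$, and the octahedron relations $e_1m_1=0$ and $g'e_1=ge_2$ now translate exactly into $bf=f'a$ and $g'b=g$. The essential point, which your proposal inverts, is that $b$ is \emph{defined} at the end of this construction rather than chosen at the beginning.
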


The notion of relative theories in exact categories is introduced in \cite{DRSSK}, and generalized in the case of extriangulated categories in \cite{HLN}.

\begin{definition}$($\cite[Definition 3.8, Claim 3.9]{HLN}$)$
Let $\Fbb$ be an additive subfunctor of $\Ebb$. We denote the restriction of $\sfra$ to $\Fbb$ by $\sfra|_{\Fbb}$. Then $(\Ccal, \Fbb, \sfra|_{\Fbb})$ automatically satisfies the definition of an extriangulated category without {\rm (ET4)} and ${\rm (ET4)}^{\op}$.
\end{definition}

\begin{proposition}$($\cite[Proposition 3.16]{HLN}\label{prop:closed}$)$
The following conditions are equivalent for an additive subfunctor $\Fbb$ of $\Ebb$.
\begin{enumerate}
  \item $(\Ccal, \Fbb, \sfra|_{\Fbb})$ satisfies {\rm (ET4)} and ${\rm (ET4)}^{\op}$, and becomes an extriangulated category.
  \item $\sfra|_{\Fbb}$-inflations are closed under compositions.
  \item $\sfra|_{\Fbb}$-deflations are closed under compositions.
\end{enumerate}  
\end{proposition}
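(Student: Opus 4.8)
The plan is to reduce the statement to the two octahedral axioms, dispatch the formal implications, and then concentrate on the one substantial point, namely that closure under composition on the inflation side is equivalent to closure on the deflation side. By the Definition immediately preceding the statement, the triplet $(\Ccal,\Fbb,\sfra|_\Fbb)$ already satisfies every axiom of an extriangulated category except possibly {\rm (ET4)} and {\rm (ET4)}$^{\op}$, so (1) is equivalent to requiring both of these for $\sfra|_\Fbb$. The implications (1)$\Rightarrow$(2) and (1)$\Rightarrow$(3) are then immediate: {\rm (ET4)} applied to two $\sfra|_\Fbb$-inflations $f,g$ produces an $\sfra|_\Fbb$-triangle whose first morphism is $gf$, so $gf$ is again an $\sfra|_\Fbb$-inflation, and dually for (3) via {\rm (ET4)}$^{\op}$. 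The one auxiliary fact I would isolate first is a uniqueness lemma: if $A\ovs{f}{\to}B\ovs{g}{\to}C\ovs{\de}{\dra}$ and $A\ovs{f}{\to}B\ovs{g'}{\to}C'\ovs{\de'}{\dra}$ are $\sfra$-triangles sharing the same inflation $f$, then {\rm (ET3)} applied to the identity square yields a morphism $(1_A,1_B,c)$ of $\sfra$-triangles; $c$ is an isomorphism by the five lemma applied to the exact sequences of Proposition~\ref{prop:exactseq}, and $\de=\de'c=c^\ast\de'$. As $\Fbb$ is a subfunctor it is closed under $c^\ast$, so $\de\in\Fbb$ if and only if $\de'\in\Fbb$; that is, whether the extension realizing a given inflation lies in $\Fbb$ depends only on the inflation. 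The dual statement holds for a fixed deflation.

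Next I would prove (2)$\Rightarrow${\rm (ET4)} (whence, by passing to $\Ccal^{\op}$, also (3)$\Rightarrow${\rm (ET4)}$^{\op}$). Starting from $\sfra|_\Fbb$-triangles $A\ovs{f}{\to}B\ovs{f'}{\to}D\ovs{\de}{\dra}$ and $B\ovs{g}{\to}C\ovs{g'}{\to}F\ovs{\rho}{\dra}$ with $\de,\rho\in\Fbb$, apply the ambient {\rm (ET4)} to obtain the octahedral diagram with new $\sfra$-triangles $A\ovs{gf}{\to}C\ovs{h'}{\to}E\ovs{\de'}{\dra}$ and $D\ovs{d}{\to}E\ovs{e}{\to}F\ovs{f'\rho}{\dra}$. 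Here $f'\rho=(f')_\ast\rho$ lies in $\Fbb$ because the subfunctor $\Fbb$ is closed under the pushout action $(f')_\ast$, so the right-hand column is already an $\Fbb$-triangle. For the middle row, hypothesis (2) makes $gf$ an $\Fbb$-inflation, and the uniqueness lemma then forces the realizing extension $\de'$ into $\Fbb$. Hence the entire {\rm (ET4)} diagram consists of $\sfra|_\Fbb$-triangles, which is exactly {\rm (ET4)} for $\Fbb$.

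With (2)$\Rightarrow${\rm (ET4)} and (3)$\Rightarrow${\rm (ET4)}$^{\op}$ in hand, the theorem reduces to the equivalence (2)$\iff$(3), and by opposite-duality it is enough to prove (2)$\Rightarrow$(3). Given composable $\Fbb$-deflations $q\colon X\to Y$ and $p\colon Y\to Z$ with $\Fbb$-triangles $K\ovs{k}{\to}X\ovs{q}{\to}Y\ovs{\al}{\dra}$ and $L\ovs{l}{\to}Y\ovs{p}{\to}Z\ovs{\be}{\dra}$, I would apply the ambient {\rm (ET4)}$^{\op}$ to obtain the octahedron: an $\sfra$-triangle $M\ovs{m}{\to}X\ovs{pq}{\to}Z\ovs{\ga}{\dra}$ realizing the composite $pq$, together with a connecting $\sfra$-triangle $K\ovs{s}{\to}M\ovs{t}{\to}L\ovs{\eta}{\dra}$ satisfying $ms=k$ and $qm=lt$. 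Via Proposition~\ref{prop:bicar} the middle square is a weak pullback and the connecting extension is the pullback $\eta=\al l=l^\ast\al$; since $\Fbb$ is closed under $l^\ast$ we get $\eta\in\Fbb$, so $K\ovs{s}{\to}M\ovs{t}{\to}L$ is an $\Fbb$-triangle and $s$ is an $\Fbb$-inflation. The task thus becomes showing $\ga\in\Fbb$, equivalently that $m$ is an $\Fbb$-inflation.

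I expect this last step to be the main obstacle. One has the factorization $ms=k$ in which both $k$ and $s$ are $\Fbb$-inflations, but deducing that the remaining factor $m$ is an $\Fbb$-inflation is a cancellation that is not formal: pushing $\ga$ forward along $t$ only recovers $\be\in\Fbb$, and membership of $\ga$ itself is not detected by its images under the subfunctor operations. To close the gap I would feed the inflations $s$ and $m$ into the ambient {\rm (ET4)}, read off the resulting extension identities, and combine them with the long exact sequences of Proposition~\ref{prop:exactseq} for the connecting triangle $K\ovs{s}{\to}M\ovs{t}{\to}L\ovs{\al l}{\dra}$, where the kernel of $t_\ast$ is the image of $s_\ast$; the point is to pin down the indeterminacy of $\ga$ as a correction term pushed forward from $\Ebb(Z,K)$ that hypothesis (2) controls, forcing $\ga\in\Fbb$. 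This transfer of composition-closure from the inflation side to the deflation side is the technical heart of the argument; everything else is either the bookkeeping reduction above or a direct application of functoriality of $\Fbb$.
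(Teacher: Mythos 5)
Your scaffolding is correct and is the standard one: the reduction of (1) to the axioms (ET4) and (ET4)$^{\op}$ for $\sfra|_{\Fbb}$, the uniqueness lemma via (ET3) and the five lemma applied to the exact sequences of Proposition~\ref{prop:exactseq}, the implication (2)$\Rightarrow$(ET4) for $\Fbb$ (the second output triangle is realized by $f'\rho\in\Fbb$ because $\Fbb$ is a subfunctor, the first by hypothesis (2) together with uniqueness), and the reduction of the whole proposition to (2)$\Leftrightarrow$(3). Note that the paper itself does not prove this statement---it cites \cite[Proposition 3.16]{HLN}---but its Appendix~A proves the parallel equivalence between closed subfunctors and saturated proper classes, and the direction you are missing is exactly the content delegated there to \cite[Lemma 3.14]{HLN}.

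The genuine gap is the step you yourself call the main obstacle, and the plan you sketch for it does not work as stated. You propose to control ``the indeterminacy of $\ga$ as a correction term pushed forward from $\Ebb(Z,K)$''. But the exact sequence of Proposition~\ref{prop:exactseq} for $K\ovs{s}{\to}M\ovs{t}{\to}L\ovs{\al l}{\dra}$ gives $\ker t_{\ast}=\im\bigl(s_{\ast}\co\Ebb(Z,K)\to\Ebb(Z,M)\bigr)$ with $s_{\ast}$ applied to \emph{all} of $\Ebb(Z,K)$: even if you produced some $\ga_0\in\Fbb(Z,M)$ with $t_{\ast}\ga_0=\be$ (itself unclear), you would only learn $\ga=\ga_0+s_{\ast}\vare$ for an arbitrary $\vare\in\Ebb(Z,K)$, and neither the subfunctor axiom (which controls only $s_{\ast}(\Fbb(Z,K))$) nor hypothesis (2) says anything about such $s_{\ast}\vare$. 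What closes the gap is a reflection lemma: if $f\co A\to B$ is an $\Fbb$-inflation with $\Fbb$-triangle $A\ovs{f}{\to}B\ovs{g}{\to}E\ovs{\vare}{\dra}$, then $f\de\in\Fbb(C,B)$ forces $\de\in\Fbb(C,A)$. This does follow from (2): first, $\ker f_{\ast}\subseteq\Fbb(C,A)$ since it is the image of $(\vare_{\sha})_C$ and $\vare\in\Fbb$; second, any $\theta\in\Fbb(C,B)$ with $g\theta=0$ lies in $f_{\ast}(\Fbb(C,A))$---realize $\theta$ by an inflation $u$, apply the ambient (ET4) to $f$ and $u$, note that the second output triangle realizes $g\theta=0$ and hence splits, and use the resulting section $j$ together with (2) (which makes $uf$ an $\Fbb$-inflation) and your uniqueness lemma to write $\theta=f(\de' j)$ with $\de'\in\Fbb$. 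Given this lemma, your (ET4)$^{\op}$ setup finishes with one application of the shifted octahedron (Proposition~\ref{prop:shifted}) to the two triangles over $Z$, namely $M\ovs{m}{\to}X\ovs{pq}{\to}Z\ovs{\ga}{\dra}$ and $L\ovs{l}{\to}Y\ovs{p}{\to}Z\ovs{\be}{\dra}$: writing $N$ for its central object, the map $m_1\co M\to N$ is an $\Fbb$-inflation because its extension is $\ga p$, which equals $s\al\in\Fbb$ by the (ET4)$^{\op}$ compatibilities, and the relation $m_1\ga+m_2\be=0$ gives $m_1\ga\in\Fbb(Z,N)$, so reflection along $m_1$ yields $\ga\in\Fbb$. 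Alternatively, prove that (2) implies the saturation condition of Definition~\ref{def:proper} (this is \cite[Lemma 3.14]{HLN}, and it follows from the reflection lemma via the identity $m_1\de_1+m_2\de_2=0$) and then run the appendix's (ET4)$^{\op}$/shifted-octahedron/(ET3)$^{\op}$ argument verbatim. Either way, an ingredient of this kind is indispensable, and it is precisely what your outline leaves open.
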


We call an additive subfunctor $\Fbb$ of $\Ebb$ a \emph{closed subfunctor} if it satisfies the above equivalent conditions. 
In this case, we call $(\Ccal,\Fbb,\sfra|_{\Fbb})$ a \emph{relative theory} of $\CEs$. For a relative theory $(\Ccal,\Fbb,\sfra|_{\Fbb})$, we denote it by $(\Ccal, \Fbb)$, abbreviately.

\section{Closed subfunctors arising from half exact functors}\label{sec3}
In this section, we introduce methods of constructing closed subfunctors of an extriangulated category $\CEs$ from half exact functors. 
First, let us recall the notion of a half exact functor, which is also called a homological functor in \cite{LN}. 

\begin{definition}$($\cite[Definition 2.7]{Oga}, \cite[Definition 3.3]{LN}$)$
Let $\Acal$ be an abelian category. 
An additive functor $H\colon\Ccal\to\Acal$ is a \emph{half exact functor} if the sequence 
$HA\ovs{Hf}{\to}HB\ovs{Hg}{\to}HC$ is exact at $HB$ for any $\sfra$-triangle $A\ovs{f}{\to}B\ovs{g}{\to}C\dra$. 
Moreover, if $Hg$ (resp. $Hf$) is an epimorphism (resp. monomorphism), we call $H$ a \emph{right exact functor} (resp. \emph{left exact functor}). 
\end{definition}

\begin{definition}
Let $H\co\Ccal\to\Acal$ be a half exact functor. We define a subset $\Ebb_{R}^{H}(C, A)$ of $\Ebb(C, A)$ consisting of $\de$ such that for an $\sfra$-triangle $A\ovs{f}{\to}B\ovs{g}{\to}C\ovs{\de}{\dra}$, we have that $Hg$ is an epimorphism in $\Acal$. 
Note that this definition is well-defined, that is, it does not depend on the choice of an $\sfra$-triangle of $\de$.
Similarly, we define a subset $\Ebb_{L}^{H}(C, A)$ of $\Ebb(C, A)$ consisting of $\de$ such that $Hf$ is a monomorphism in $\Acal$. 
\end{definition}

Then $\Ebb_R^H$ defines the maximum closed subfunctor such that $H$ becomes a right exact functor:

\begin{proposition}\label{prop:left}
Let $H\co\Ccal\to\Acal$ be a half exact functor. Then the following assertions hold.
\begin{enumerate}
  \item $\Ebb_{R}^{H}$ is a closed subfunctor of $\Ebb$, hence $(\Ccal, \Ebb_{R}^{H})$ is a relative theory of $\CEs$.
  \item $H$ restricts to a right exact functor $H\colon (\Ccal, \Ebb_{R}^{H})\to\Acal$. 
  \item Let $(\Ccal, \Fbb)$ be a relative theory of $(\Ccal,\Ebb,\sfra)$. 
  If $H$ restricts to a right exact functor $H\colon(\Ccal, \Fbb)\to\Acal$, then we have $\Fbb\subseteq\Ebb_{R}^{H}$.
\end{enumerate}
\end{proposition}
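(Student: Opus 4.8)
The plan is to prove the three assertions in order, using the characterization of closed subfunctors from Proposition~\ref{prop:closed} and the long exact sequences from Proposition~\ref{prop:exactseq}.

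First, for assertion (1), I would verify that $\Ebb_R^H$ is an additive subfunctor of $\Ebb$. This requires checking that $\Ebb_R^H(C,A)$ is a subgroup of $\Ebb(C,A)$ and that it is preserved by the induced maps $a_\ast$ and $c^\ast$ for morphisms $a\co A\to A'$ and $c\co C'\to C$. The subgroup condition should follow from the fact that the full subcategory of $\Acal$ consisting of epimorphisms behaves well under the additive structure (the split extension $0$ lies in $\Ebb_R^H$ since $Hg$ is a split epimorphism there, and biproducts of $\sfra$-triangles realize $\de\opl\de'$ by Definition~\ref{def:real}). For functoriality, given a morphism of $\Ebb$-extensions I would use the morphism of $\sfra$-triangles it induces together with functoriality of $H$ to transport the epimorphism condition; the weak pullback/pushout description in Proposition~\ref{prop:bicar} should let me relate the deflations of $\de$ and $a\de$, respectively $\de$ and $\de c$. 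Then, to conclude that $\Ebb_R^H$ is \emph{closed}, I would verify condition~(3) of Proposition~\ref{prop:closed}, namely that $\sfra|_{\Ebb_R^H}$-deflations are closed under composition. Given two composable such deflations $g\co B\to C$ and $g'\co C\to D$ with $Hg$ and $Hg'$ both epimorphisms, the composite $g'g$ is a deflation for an $\Ebb_R^H$-extension; I would apply (ET4) to obtain the relevant octahedral diagram and use that $H(g'g)=Hg'\circ Hg$ is a composite of epimorphisms, hence an epimorphism, so the composite deflation again lies in the subfunctor.

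Assertion (2) is essentially the definition: for any $\sfra|_{\Ebb_R^H}$-triangle $A\to B\ovs{g}{\to}C\ovs{\de}{\dra}$ with $\de\in\Ebb_R^H(C,A)$, the map $Hg$ is an epimorphism by construction, and $H$ is already half exact, so it is right exact on $(\Ccal,\Ebb_R^H)$. For assertion (3), suppose $(\Ccal,\Fbb)$ is a relative theory on which $H$ is right exact, and take any $\de\in\Fbb(C,A)$ with $\sfra$-triangle $A\to B\ovs{g}{\to}C\ovs{\de}{\dra}$. Since $H$ is right exact on $(\Ccal,\Fbb)$, the map $Hg$ is an epimorphism, which is exactly the defining condition for $\de\in\Ebb_R^H(C,A)$; hence $\Fbb\subseteq\Ebb_R^H$.

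The main obstacle I anticipate is the careful verification in assertion (1) that $\Ebb_R^H$ is genuinely a subgroup and is functorial, since the epimorphism condition lives in $\Acal$ while the manipulations happen in $\Ccal$ and must be transported through $H$. In particular, showing that the well-definedness remark holds and that $a\de,\de c\in\Ebb_R^H$ whenever $\de\in\Ebb_R^H$ will require combining the long exact sequence of Proposition~\ref{prop:exactseq} (to control images and surjectivity after applying $H$) with the weak pullback/pushout squares of Proposition~\ref{prop:bicar}. The closure-under-composition step via (ET4) is conceptually clean but depends on correctly identifying which deflation in the octahedral diagram is the relevant composite; once the diagram is in place, the epimorphism argument is immediate from functoriality of $H$.
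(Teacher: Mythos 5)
Your plan follows the same architecture as the paper's proof: show $\Ebb_R^H$ is an additive subfunctor (functoriality plus the identity $\de'-\de=(-\id\ \id)(\de\opl\de')\bigl(\substack{\id \\ \id}\bigr)$, which reduces the subgroup condition to closure under $\opl$ and under the $\Ccal$-action), then deduce closedness from Proposition~\ref{prop:closed}, since $g'g$ is an $\sfra$-deflation by ${\rm (ET4)}^{\op}$ and $H(g'g)=Hg'\ci Hg$ is a composite of epimorphisms; assertions (2) and (3) are indeed immediate from the definition of $\Ebb_R^H$. The cobase-change case $a\de$ also works exactly as you say: the realization axiom yields a morphism of $\sfra$-triangles $(a,b,\id_C)$, so $Hg'\ci Hb=Hg$ forces $Hg'$ to be an epimorphism.

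The genuine gap is the base-change case $\de c\in\Ebb_R^H$, which you correctly flag as the main obstacle but whose resolution, as you describe it, would not go through. Neither of your two cited tools applies directly: Proposition~\ref{prop:bicar} gives only a \emph{weak} pushout/pullback in $\Ccal$, and weak universality is not preserved by $H$, so from the commutative square $Hc\ci Hg'=Hg\ci Hb$ alone one cannot conclude anything about $Hg'$; and Proposition~\ref{prop:exactseq} is a statement about the functors $\Ccal(-,-)$ and $\Ebb(-,-)$ only --- a general half exact functor $H$ admits no long exact sequence, so there is no ``long exact sequence after applying $H$'' to invoke. The missing idea, which is what the paper uses, is the further datum provided by (the dual of) Proposition~\ref{prop:bicar}: an auxiliary $\sfra$-triangle $B'\ovs{\bigl(\substack{b \\ -g'}\bigr)}{\to}B\opl C'\ovs{(g\ c)}{\to}C\ovs{f'\de}{\dra}$, where $f'\co A\to B'$ is the inflation realizing $\de c$. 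Its extension $f'\de$ lies in $\Ebb_R^H$ by the cobase-change case --- so the two functoriality cases are not parallel, and $a\de$ must be treated first --- hence $H(g\ c)=(Hg\ Hc)$ is an epimorphism; combined with half exactness of $H$ on this auxiliary triangle this yields a right exact sequence $HB'\to HB\opl HC'\to HC\to 0$ in $\Acal$, i.e.\ the image square is an honest pushout in $\Acal$. One then concludes because parallel arrows in a pushout square in an abelian category have isomorphic cokernels, so $Hg$ epi implies $Hg'$ epi. Note the direction of this last step: what is needed is the converse of the usual ``pushouts of epimorphisms are epimorphisms,'' which holds via the cokernel comparison but only for a genuine pushout in $\Acal$ --- precisely what the auxiliary triangle is needed to produce.
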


\begin{proof}
(1) First, we show that $\Ebb_{R}^{H}$ is a subfunctor of $\Ebb$. 
For any $\de\in\Ebb_{R}^{H}(C, A)$ and any morphism $a \colon A\to A'$, we show $a\de\in\Ebb_{R}^{H}(C, A')$. By Definition~\ref{def:real}, we obtain the following commutative diagram.
  \[
  \begin{tikzcd}
    A \rar["f"] \dar["a"] & B \rar["g"] \dar["b"] & C \rar[dashed, "\de"]\dar[equal] & \  \\
    A' \rar["f'"] & B' \rar["g'"] & C \rar[dashed, "a\de"] & \ 
  \end{tikzcd}
  \]
It is clear that if $Hg$ is epi, then so is $Hg'$, hence we have $a\de\in\Ebb_{R}^{H}(C,A')$.

For any $\de\in\Ebb_{R}^{H}(C, A)$ and any morphism $c \colon C'\to C$, we show $\de c\in\Ebb_{R}^{H}(C', A)$. 
By the dual of Proposition~\ref{prop:bicar}, we have the following commutative diagram,
  \[
  \begin{tikzcd}
    A \rar["f'"] \dar[equal] & B' \rar["g'"] \dar["b"] & C' \rar[dashed, "\de c"]\dar["c"] & \  \\
    A \rar["f"] & B \rar["g"] & C \rar[dashed, "\de"] & \ 
  \end{tikzcd}
  \] 
satisfying that $B'\ovs{\bigl(\substack{b \\ -g'}\bigr)}{\to}B\opl C'\ovs{(g c)}{\to}C\ovs{f'\de}{\dra}$ is an $\sfra$-triangle. 
Applying $H$ to the above diagram, we obtain a commutative diagram
  \[
  \begin{tikzcd}
    HA \rar["Hf'"] \dar[equal] & HB' \rar["Hg'"] \dar["Hb"'] \ar[rd, phantom, "{\rm (*)}"] & HC' \dar["Hc"] \\
    HA \rar["Hf"'] & HB \rar["Hg"'] & HC 
  \end{tikzcd}
  \]
  in $\Acal$. Since we have $f'\de\in\Ebb_{R}^{H}(B', C)$ by the argument in the previous paragraph, we have a right exact sequence 
  \[
  \begin{tikzcd}[column sep=1.5cm]
  HB' \rar["\bigl(\substack{Hb \\ -Hg'}\bigr)"] & HB\opl HC' \rar["(Hg Hc)"] & HC \rar & 0
  \end{tikzcd}
  \]
   in $\Acal$, hence $(*)$ is a push-out diagram. Since $Hg$ is an epimorphism, it follows that $Hg'$ is epimorphic in $\Acal$, hence we have $\de c\in\Ebb_{R}^{H}$. Thus $\Ebb_{R}^{H}$ is a subfunctor of $\Ebb$.

Next, we will show that $\Ebb_{R}^{H}(C, A)$ is a subgroup of $\Ebb(C, A)$. Clearly $0\in\Ebb_{R}^{H}(C, A)$. 
We only need to show $\de'-\de\in\Ebb_{R}^{H}(C, A)$ for any $\de, \de'\in\Ebb_{R}^{H}(C, A)$. 
Since we have $\de'-\de=(-\id \id)(\de\opl\de')(\substack{\id \\ \id})$ with $(-\id \id)\co A\opl A\to A$ and $(\substack{\id \\ \id})\co C\to C\opl C$, 
it is enough to show that $\de\opl\de'\in\Ebb_{R}^{H}(C\opl C, A\opl A)$, and this follows immediately from the definition of an additive realization $\sfra$. 
Thus $\Ebb_{R}^{H}$ is an additive subfunctor of $\Ebb$. 

Finally, it can be easily seen that $\sfra|_{\Ebb_{R}^{H}}$-deflations are closed under compositions. Thus $\Ebb_{R}^{H}$ is a closed subfunctor of $\Ebb$.

(2) and (3) follow immediately from the construction of $\Ebb_{R}^{H}$. 
\end{proof}

Dually, we obtain the statement with respect to $\Ebb_{L}^{H}$ by applying the above proposition to a contravariant half exact functor $H\co\Ccal\to\Acal^{\op}$.

The previous construction recovers Examples~\ref{ex:projectivize}, \ref{ex:proper}.

\begin{example}$($\cite[Proposition 1.7]{DRSSK}, \cite[Definition 3.18]{HLN}, \cite[Definition-Proposition 5.6]{INP}\label{ex:projectivize}$)$
Let $\Dcal$ be a subcategory of $\Ccal$. In \cite[Proposition 3.17]{HLN}, it is shown that additive subfunctors $\Ebb_{\Dcal}$ and $\Ebb^{\Dcal}$ of $\Ebb$ defined by
  \[
   \Ebb_{\Dcal}(C, A)=\{\de\in\Ebb(C, A)\mid(\de_{\sha})_{D}=0 \ \text{for any} \ D\in\Dcal\}
  \]
  \[
   \Ebb^{\Dcal}(C, A)=\{\de\in\Ebb(C, A)\mid(\de^{\sha})_{D}=0 \ \text{for any} \ D\in\Dcal\}
  \]
are closed subfunctors of $\Ebb$. They are the special cases of Proposition~\ref{prop:left} because the restricted Yoneda functors $Y_{\Dcal}\colon\Ccal\to\Mod\Dcal$ and $Y^{\Dcal}\colon\Ccal\to\Mod\Dcal^{\op}$ are half exact functors and $\Ebb_{\Dcal}=\Ebb_{R}^{Y_\Dcal}$ and $\Ebb^{\Dcal}=\Ebb_{R}^{Y^{\Dcal}}$ hold. Indeed, for any $\sfra$-triangle $A\ovs{f}{\to}B\ovs{g}{\to}C\ovs{\de}{\dra}$, we have an exact sequence
    \[
    \begin{tikzcd}
       \Ccal(-,B)|_{\Dcal} \rar["Y_{\Dcal}(g)"] & \Ccal(-, C)|_{\Dcal} \rar["\de_{\sha}"] & \Ebb(-, A)|_{\Dcal}
    \end{tikzcd}
    \]
in $\Mod\Dcal$, and the exactness implies $\Ebb_{R}^{Y_\Dcal}(C, A)=\Ebb_{\Dcal}(C, A)$. 
\end{example}

\begin{example}$($\cite[Example 2.3 (3),(4)]{Bel}\label{ex:proper}$)$
Let $\CEs$ be a triangulated category with a shift functor $\Si$ viewed as an extriangulated category and $H\co\Ccal\to\Acal$ a homological functor to an abelian category $\Acal$. 
We define $\Ecal$ as a class of distinguished triangles
  \begin{equation}\label{disttri}
    \begin{tikzcd}
      A \rar["f"] &B \rar["g"] &C\rar["\de"] & \Si A
    \end{tikzcd}
  \end{equation}
which satisfy that for any $i\in\Zbb$, the induced sequence
  \[
    \begin{tikzcd}
      0\rar & H^{i}A\rar["H^{i}f"] &H^{i}B\rar["H^{i}g"] &H^{i}C\rar & 0
    \end{tikzcd}
  \]
is exact in $\Acal$ with $H^{i}=H\ci\Si^i$. The class $\Ecal$ consists a \emph{proper class of triangles} in the sense of \cite{Bel}. Meanwhile the above condition is equivalent to $\de\in\bigcap_{i\in\Zbb}(\Ebb_{R}^{H^{i}}(C,A)\cap\Ebb_{L}^{H^{i}}(C,A))$, hence $\Ecal$ defines a relative theory of $\CEs$. 

Let $\Dcal$ be a subcategory of $\Ccal$ closed under $\Si$ and $\Si^{-1}$. We define $\Ecal(\Dcal)$ as a class of triangles $(\ref{disttri})$ which satisfy that the induced sequence
  \[
    \begin{tikzcd}
      0\rar & (-,A)|_{\Dcal}\rar["Y_{\Dcal}(f)"] &(-,B)|_{\Dcal}\rar["Y_{\Dcal}(g)"] &(-,C)|_{\Dcal}\rar & 0
    \end{tikzcd}
  \]
is exact in $\Mod\Dcal$. This condition is equivalent to the previous case of $H=Y_\Dcal$, therefore $\Ecal(\Dcal)$ is a proper class of triangles. Moreover if $\CEs$ is compactly generated and $\Dcal$ is the subcategory consisting of compact objects, then a distinguished triangle in $\Ecal(\Dcal)$ is called a \emph{pure triangle}.
\end{example}

\begin{example}
Every extriangulated category has a unique maximal exact relative theory, that is, there exists a unique maximal closed subfunctor $\Ebb^{\ex}$ of $\Ebb$ such that $(\Ccal, \Ebb^{\ex})$ is an exact category. 
Indeed, we set 
\[
 \Ebb^{\ex}:=\Ebb_{L}^{Y_\Ccal}\cap\Ebb_{L}^{Y^\Ccal}
\]
that is, any $\sfra|_{\Ebb^{\ex}}$-inflation is a monomorphic $\sfra$-inflation 
and any $\sfra|_{\Ebb^{\ex}}$-deflation is an epimorphic $\sfra$-deflation. 
By \cite[Corollary 3.18]{NP}, the relative theory $(\Ccal,\Ebb^{\ex})$ is an exact category.
\end{example}

Next we will observe relative theories which we have just constructed, in terms of functor categories. We define the following subcategories of $\Mod\Ccal$.
\[
 \defe\Ebb\se\coh\Ccal\se\mod\Ccal\se\Mod\Ccal
\]
See \cite[Subsection 2.2]{Eno} for more details.

\begin{definition}$($\cite[Definition 2.6]{Eno},\cite[Definition 2.4]{Oga}$)$
Let $M$ be a right $\Ccal$-module.
\begin{itemize}
\item $M$ is \emph{finitely generated} if there exists an epimorphism $\Ccal(-,C)\defl M$ for some $C\in\Ccal$. 
\item $M$ is \emph{finitely presented} if there exists an exact sequence
  \[
  \begin{tikzcd}
  \Ccal(-,B) \rar & \Ccal(-, C) \rar & M\rar & 0
  \end{tikzcd}
  \]
in $\Mod\Ccal$ with $B,C\in\Ccal$. We denote by $\mod\Ccal$ the subcategory of $\Mod\Ccal$ consisting of finitely presented $\Ccal$-modules.
\item $M$ is \emph{coherent} if $M$ is finitely presented and each finitely generated submodule of $M$ is finitely presented. We denote by $\coh\Ccal$ the subcategory of $\Mod\Ccal$ consisting of coherent $\Ccal$-modules.
\item Let $\de\in\Ebb(C, A)$ be an $\Ebb$-extension. We call the image of $\de_{\sharp}\co\Ccal(-,C)\to\Ebb(-,A)$ in $\Mod\Ccal$ the \emph{contravariant defect} of $\de$ and denote it by $\de^{\ast}$. By Proposition~\ref{prop:exactseq}, there is an exact sequence
    \begin{equation}\label{defect}
    \begin{tikzcd}
       \Ccal(-, A)\rar["f\ci-"]&\Ccal(-,B) \rar["g\ci-"] & \Ccal(-, C) \rar & \de^{\ast}\rar & 0
    \end{tikzcd}
    \end{equation}
for any $\sfra$-triangle $A\ovs{f}{\to}B\ovs{g}{\to}C\ovs{\de}{\dra}$. Dually the \emph{covariant defect} $\de_{\ast}\in\Mod\Ccal^{\op}$ of $\de$ is defined. We denote by $\defe\Ebb$ (resp. $\Ebb$-$\defe$) the subcategory of $\Mod\Ccal$ (resp. $\Mod\Ccal^{\op}$) consisting of $\Ccal$-modules isomorphic to contravariant (resp. covariant) defects of $\Ebb$-extensions. 
\end{itemize}
\end{definition}

\begin{remark}
\begin{enumerate}
\item It is well-known that $\coh\Ccal$ is an abelian category. We refer to \cite[Proposition 1.5]{Her} for the proof. If $\Ccal$ has weak kernels, then $\coh\Ccal=\mod\Ccal$ holds. 
\item If $\CEs$ is a triangulated category, then $\defe\Ebb=\mod\Ccal$ holds. This follows from that any morphism in $\Ccal$ is an $\sfra$-deflation.
\end{enumerate}
\end{remark}

In \cite{Eno}, closed subfunctors are classified by Serre subcategories of $\defe\Ebb$. In case $\Ccal$ has weak kernels, this result is generalized to $n$-exangulated categories by Hu-Ma-Zhang-Zhou in \cite{HMZZ}. 

\begin{theorem}$($\cite[Theorem A,B]{Eno}\label{thm:eno}$)$
Let $\CEs$ be an extriangulated category. 
\begin{enumerate}
  \item $\defe\Ebb$ is a Serre subcategory of $\coh\Ccal$, hence an abelian category.
  \item There exists a bijective correspondence between the set $\Rel(\Ebb)$ of closed subfunctors of $\Ebb$ and the set $\Serre(\defe\Ebb)$ of Serre subcategories of $\defe\Ebb$.
      \[
    \begin{tikzcd}[column sep=1cm]
      \Serre(\defe\Ebb) \rar["\Scal\mapsto\Fbb(\Scal)", shift left] & [2em] \Rel(\Ebb) \lar["\Fbb\mapsto\defe\Fbb", shift left] 
    \end{tikzcd}
    \]
Here, $\Fbb(\Scal)$ is defined by $\Fbb(\Scal)(C,A)=\{\de\in\Ebb(C,A)\mid\de^{\ast}\in\Scal\}$ and $\defe\Fbb$ is defined by $\defe\Fbb=\{F\in\defe\Ebb\mid F\iso\de^{\ast} \ \text{for some} \ \de\in\Fbb(C,A)\}$.     
\end{enumerate}
\end{theorem}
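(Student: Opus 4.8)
The plan is to reduce everything to the presentation of a contravariant defect as a cokernel of representables. By the exact sequence $(\ref{defect})$ we have $\de^{\ast}\iso\coker(\Ccal(-,g))$ for any $\sfra$-triangle $A\to B\ovs{g}{\to}C\ovs{\de}{\dra}$, and conversely every $\sfra$-deflation $g$ arises this way, so $F\in\defe\Ebb$ if and only if $F\iso\coker(\Ccal(-,g))$ for some $\sfra$-deflation $g$. First I would record two computations describing how the defect transforms under the two actions of $\Ebb$. For $a\co A\to A'$, the morphism of $\sfra$-triangles from Definition~\ref{def:real} exhibits $(a\de)^{\ast}$ as a \emph{quotient} of $\de^{\ast}$. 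Dually, using Proposition~\ref{prop:bicar}, for $c\co C'\to C$ the induced map $(\de c)^{\ast}\to\de^{\ast}$ is a \emph{monomorphism}: an element of its kernel over a test object $X$ is a morphism $\alpha\co X\to C'$ with $c\alpha\in\im g$, and applying the exact sequence of Proposition~\ref{prop:exactseq} to the $\sfra$-triangle $B'\to B\opl C'\to C$ of Proposition~\ref{prop:bicar} forces $\alpha\in\im g'$, so the class vanishes; its image is $\im(\Ccal(-,C')\to\de^{\ast})$.

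These two computations drive Part~(1). Given a finitely generated submodule $U\se\de^{\ast}$, I would lift its generators to a morphism $d\co D\to C$; then $\de^{\ast}/U\iso\coker(\Ccal(-,(g\ d)))$ and $(g\ d)\co B\opl D\to C$ is an $\sfra$-deflation, being the composite of $g\opl\id_D$ with the split epimorphism $(\id_C\ d)$, so $\defe\Ebb$ is closed under quotients. Moreover $U=\im(\Ccal(-,D)\to\de^{\ast})$ is exactly the image of $(\de d)^{\ast}\to\de^{\ast}$, which by the monomorphism computation is isomorphic to the defect $(\de d)^{\ast}$. This simultaneously shows that every finitely generated submodule of a defect is a finitely presented defect, whence $\defe\Ebb\se\coh\Ccal$ and $\defe\Ebb$ is closed under subobjects. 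For closure under extensions I would splice the presenting deflations of the two end terms of $0\to F_1\to E\to F_2\to 0$, lifting an epimorphism onto $F_2$ through $E$ and invoking (ET4) to realize the combined $\sfra$-deflation; this is the one place where the octahedral axiom is essential. Since a Serre subcategory of the abelian category $\coh\Ccal$ is abelian, this gives~(1).

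For (2) the same two computations match the three conditions defining a Serre subcategory with the three structural features of a subfunctor: closure under quotients corresponds to stability under the left action $a\de$, closure under subobjects to stability under the right action $\de c$, and closure under extensions (hence under finite direct sums) to the subgroup condition, via $\de'-\de=(-\id\ \id)(\de\opl\de')(\substack{\id \\ \id})$. This shows $\Fbb(\Scal)$ is an additive subfunctor; that it is \emph{closed} (its deflations compose) again uses (ET4) together with closure under extensions. Conversely, a closed subfunctor $\Fbb$ makes $(\Ccal,\Fbb)$ an extriangulated category whose defects agree with those computed in $\Ccal$, so Part~(1) applied to $(\Ccal,\Fbb)$ shows $\defe\Fbb$ is a Serre subcategory of $\coh\Ccal$, hence of $\defe\Ebb$.

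It remains to check the two assignments are mutually inverse. The identity $\defe(\Fbb(\Scal))=\Scal$ is immediate, since every object of $\Scal\se\defe\Ebb$ is literally a defect. The reverse identity $\Fbb(\defe\Fbb)=\Fbb$ is the heart of the theorem and I expect it to be the main obstacle. The inclusion $\Fbb\se\Fbb(\defe\Fbb)$ is trivial; for the converse one must show that membership in $\Fbb$ is detected by the isomorphism class of the defect, that is, $\de^{\ast}\iso\eta^{\ast}$ with $\eta\in\Fbb$ implies $\de\in\Fbb$. I would lift the isomorphism along the projection $\Ccal(-,C')\defl\eta^{\ast}$ to $c\co C\to C'$, and replace $\eta$ by $\eta c\in\Fbb$ (using the right action); then $(\eta c)^{\ast}=\eta^{\ast}$ and the resulting isomorphism $\de^{\ast}\iso(\eta c)^{\ast}$ is compatible with the projections from the common $\Ccal(-,C)$, so the two quotients have equal kernels. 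Consequently the $\sfra$-deflations $g$ presenting $\de$ and $g'$ presenting $\eta c$ have equal images in $\Ccal(-,C)$, hence factor through one another, say $g'=g t$. Since $g'$ is an $\Fbb$-deflation and factoring off a right-hand factor preserves being a relative deflation, $g$ is an $\Fbb$-deflation, i.e.\ $\de\in\Fbb$. The delicate points are that an $\sfra$-deflation determines its $\sfra$-triangle up to isomorphism and that ``$g t$ an $\Fbb$-deflation implies $g$ an $\Fbb$-deflation'' in the relative theory $(\Ccal,\Fbb)$; both are standard properties of relative extriangulated structures, and verifying them carefully is where the real work lies.
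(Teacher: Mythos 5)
The paper itself gives no proof of this statement to compare against: Theorem \ref{thm:eno} is imported from \cite[Theorems A, B]{Eno}, so your proposal can only be measured against the cited source. Measured that way, it is correct and follows essentially Enomoto's strategy: reduce everything to presentations $\de^{\ast}\iso\Cokernel\Ccal(-,g)$ by $\sfra$-deflations; establish the two transformation rules (that $(a\de)^{\ast}$ is a quotient of $\de^{\ast}$, and that $(\de c)^{\ast}\to\de^{\ast}$ is a monomorphism with image $\Image(\Ccal(-,C')\to\de^{\ast})$, proved exactly as you say via the $\sfra$-triangle $B'\to B\opl C'\to C$ from the dual of Proposition \ref{prop:bicar}); let these carry part (1) and the subfunctor axioms in part (2); and locate the crux in the fact that membership in a closed subfunctor is detected by the isomorphism class of the defect.

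Two of your steps can be closed off more cheaply than your final paragraph suggests, and it is worth recording how. First, the factoring step: suppose $g\co B\to C$ realizes $\de$, $g'\co B'\to C$ realizes $\eta c\in\Fbb(C,A')$, and $g'=gt$. Applying ${\rm (ET3)}^{\op}$ to the commutative square $\id_C\ci g'=g\ci t$ yields a morphism $a\co A'\to A$ with $a(\eta c)=\de\cdot\id_C=\de$; since $\Fbb$ is stable under the left action, $\de\in\Fbb$ at once. This bypasses both of your flagged ``delicate points'': you need neither the uniqueness of the $\sfra$-triangle over a given deflation nor any relative obscure axiom, and only one of the two factorizations $g'=gt$, $g=g's$ is ever used. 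Second, closure of $\defe\Ebb$ under extensions: given $0\to\de_1^{\ast}\to E\to\de_2^{\ast}\to 0$ with $\de_i$ realized by $g_i\co B_i\to C_i$, lift $\Ccal(-,C_2)\defl\de_2^{\ast}$ through $E\defl\de_2^{\ast}$, observe that the resulting composite $\Ccal(-,B_2)\to\Ccal(-,C_2)\to E$ lands in $\de_1^{\ast}$ and hence, by projectivity of representables, is induced by some $h\co B_2\to C_1$; then the matrix $\begin{sbmatrix} g_1 & h \\ 0 & g_2 \end{sbmatrix}\co B_1\opl B_2\to C_1\opl C_2$ is a composite of an isomorphism with direct sums of $\sfra$-deflations, hence an $\sfra$-deflation, and its defect is $E$ by a diagram chase. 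So, as in your quotient-closure argument, the octahedral axiom enters only through the closure of $\sfra$-deflations under composition (Proposition \ref{prop:closed}), not through any bespoke application of {\rm (ET4)}.
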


In Proposition~\ref{prop:serel}, we observe a relationship between the above theorem and closed subfunctors constructed in Proposition~\ref{prop:left}. The following proposition is useful to associate half exact functors to functor categories.

\begin{proposition}$($\cite[Universal Property 2.1]{Kra}\label{prop:kra}$)$
Let $\Ccal$ be an additive category and $H\colon\Ccal\to\Acal$ an additive functor to an additive category $\Acal$ which has cokernels. 
Then there exists a right exact functor $\wti{H}\colon\mod\Ccal\to\Acal$ which makes the following diagram commute up to a natural isomorphism, and such a functor is unique up to a natural isomorphism.
  \[
  \begin{tikzcd}
    \Ccal \rar["Y_{\Ccal}"] \ar[rd, "H"'] & \mod\Ccal \dar["\wti{H}"] \\
    \ & \Acal
  \end{tikzcd}
  \]
Here $Y_{\Ccal}$ denotes the Yoneda embedding $\Ccal\to\mod\Ccal; C\mapsto \Ccal(-,C)$.
\end{proposition}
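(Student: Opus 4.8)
The plan is to construct $\wti{H}$ explicitly from projective presentations, exploiting that in $\Mod\Ccal$ the representable functors $\Ccal(-,C)$ are projective (by the Yoneda lemma) and that every $M\in\mod\Ccal$ admits a presentation $Y_{\Ccal}(B)\xrightarrow{Y_{\Ccal}(b)}Y_{\Ccal}(C)\to M\to 0$ with $b\colon B\to C$ recovered from the middle map by Yoneda. On objects I would set $\wti{H}(M):=\coker(Hb\colon HB\to HC)$, which exists since $\Acal$ has cokernels. Taking the trivial presentation $b=0\colon 0\to C$ for a representable $M=Y_\Ccal(C)$ gives $\wti{H}(Y_\Ccal C)=\coker(0\to HC)\cong HC$, which will supply the required comparison isomorphism $\wti{H}\circ Y_\Ccal\cong H$ once naturality is verified.

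To extend $\wti{H}$ to morphisms, given $\phi\colon M\to M'$ I would lift it to a morphism of the chosen presentations: projectivity of $Y_\Ccal(C)$ produces $Y_\Ccal(\gamma)\colon Y_\Ccal(C)\to Y_\Ccal(C')$ over $\phi$, and then projectivity of $Y_\Ccal(B)$ together with exactness of the presentation of $M'$ produces $Y_\Ccal(\beta)\colon Y_\Ccal(B)\to Y_\Ccal(B')$ compatible with the presentation maps, where $\beta,\gamma$ come from morphisms of $\Ccal$ by Yoneda. Applying $H$ and passing to cokernels defines $\wti{H}(\phi)\colon\wti{H}(M)\to\wti{H}(M')$. \textbf{The step I expect to be the main obstacle is well-definedness}, that is, independence of the lift $(\beta,\gamma)$. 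This is a homotopy argument: two lifts of $\phi$ differ by a pair lifting the zero map, so its relevant component $\gamma_0\colon C\to C'$ satisfies that $Y_\Ccal(\gamma_0)$ factors through the kernel of the projection $Y_\Ccal(C')\to M'$, which by exactness equals $\im Y_\Ccal(b')$; by projectivity of $Y_\Ccal(C)$ and Yoneda this factorization is realized by a morphism $\sigma\colon C\to B'$ in $\Ccal$ with $b'\sigma=\gamma_0$. Applying $H$ shows $H\gamma_0=Hb'\circ H\sigma$ factors through $Hb'$, hence induces the zero map on $\coker(Hb')$, so $\wti{H}(\phi)$ is independent of the choice. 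Functoriality and additivity then follow formally from this uniqueness.

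It remains to check right exactness and uniqueness, which I expect to be routine given the above. For right exactness, given a cokernel sequence $M'\xrightarrow{u}M\to M''\to 0$ in $\mod\Ccal$, I would assemble compatible presentations of $M',M,M''$ (again using projectivity of representables to lift, as in the horseshoe lemma) and compute $\wti{H}$ of each term as a cokernel in $\Acal$; a diagram chase then shows $\wti{H}(M)\to\wti{H}(M'')\to 0$ is the cokernel of $\wti{H}(u)$, so $\wti{H}$ preserves cokernels. For uniqueness, if $G\colon\mod\Ccal\to\Acal$ is right exact with $G\circ Y_\Ccal\cong H$, then applying $G$ to the defining presentation of $M$ and using right exactness gives $G(M)\cong\coker\big(G Y_\Ccal(b)\big)\cong\coker(Hb)=\wti{H}(M)$, and these isomorphisms are natural because any morphism of $\mod\Ccal$ lifts to the presentations; hence $G\cong\wti{H}$.
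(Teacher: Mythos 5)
The paper offers no proof of this proposition to compare against: it is imported verbatim from Krause \cite{Kra} (Universal Property 2.1). Your argument is the standard proof of that universal property (essentially the classical construction going back to Auslander and Freyd), and it is correct: representables are projective in $\Mod\Ccal$ by Yoneda, $\wti{H}$ is defined on objects as the cokernel in $\Acal$ of $Hb$ for a chosen presentation, the homotopy argument (a lift of the zero map factors through $Y_{\Ccal}(b')$, hence dies in $\coker(Hb')$) gives independence of lifts and hence functoriality, and right exactness plus uniqueness follow by comparing presentations exactly as you sketch; this matches the characterization the paper records right after the statement, namely that $\wti{H}$ sends $\Ccal(-,B)\to\Ccal(-,C)\to F\to 0$ to $HB\to HC\to\wti{H}F\to 0$. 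Two points you leave implicit are worth making explicit: (i) a cokernel sequence in $\mod\Ccal$ is also a cokernel (hence pointwise exact) sequence in $\Mod\Ccal$, because the cokernel of a map between finitely presented modules is finitely presented and computed pointwise — this is what licenses the pointwise lifting in your right-exactness step; and (ii) the comparison isomorphism $\wti{H}(Y_{\Ccal}C)\cong HC$ must be checked natural in $C$, which follows from your uniqueness-of-induced-maps argument applied to lifts of $Y_{\Ccal}(c)$ between the chosen and the trivial presentations.
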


In the above, the functor $\wti{H}$ is determined uniquely up to a natural isomorphism by the property that $\wti{H}$ sends a right exact sequence $\Ccal(-,B)\ovs{g\ci-}{\to}\Ccal(-,C)\to F\to 0$ in $\mod\Ccal$ to a right exact sequence $HB\ovs{Hg}{\to}HC\to\wti{H}F\to0$ in $\Acal$.
From the previous proposition, it turns out that any half exact functor induces an exact functor whose domain is $\defe\Ebb$:

\begin{proposition}\label{prop:liftexact}
Let $H\colon \Ccal\to\Acal$ be a half exact functor. 
Then the induced functor $\wti{H}$ restricts to an exact functor $\wha{H}\colon\defe\Ebb\to\Acal$. 
\end{proposition}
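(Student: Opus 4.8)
The plan is to reduce the statement to left exactness and then kill the resulting obstruction using half-exactness of $H$. Right exactness of $\wha{H}$ is free: by the description following Proposition~\ref{prop:kra}, $\wti{H}$ preserves cokernels, and by Theorem~\ref{thm:eno}(1) the category $\defe\Ebb$ is a Serre subcategory of the abelian category $\coh\Ccal$, hence abelian with exact inclusion into $\Mod\Ccal$; thus a short exact sequence in $\defe\Ebb$ is one in $\Mod\Ccal$, and $\wha{H}=\wti{H}|_{\defe\Ebb}$ is right exact. It remains to prove that $\wha{H}$ preserves monomorphisms. The one computation I would isolate first: for $F=\de^{\ast}$ coming from an $\sfra$-triangle $A\ovs{f}{\to}B\ovs{g}{\to}C\ovs{\de}{\dra}$, Proposition~\ref{prop:exactseq}(1) gives an exact sequence $\Ccal(-,A)\ovs{f\ci-}{\to}\Ccal(-,B)\ovs{g\ci-}{\to}\Ccal(-,C)\to F\to0$, and applying $\wti{H}$ yields $HA\ovs{Hf}{\to}HB\ovs{Hg}{\to}HC\to\wha{H}F\to0$; half-exactness of $H$ makes this exact also at $HB$, i.e. $\Kernel(Hg)=\Image(Hf)$.

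Now let $0\to F_1\to F_2\to F_3\to0$ be exact in $\defe\Ebb$, with $F_i=\de_i^{\ast}$ realized by $\sfra$-triangles $A_i\ovs{f_i}{\to}B_i\ovs{g_i}{\to}C_i\ovs{\de_i}{\dra}$. I would feed the representable presentations of $F_1$ and $F_3$ (extended one step to the left by $\Ccal(-,f_i)$) into the horseshoe lemma to obtain a commutative diagram of representables with split exact rows in which $B_2=B_1\opl B_3$, $C_2=C_1\opl C_3$, the middle presentation map is $\Ccal(-,\gamma)$ with $\gamma=\begin{sbmatrix}g_1 & \omega\\0 & g_3\end{sbmatrix}$ for some $\omega\co B_3\to C_1$, and the next map to the left is $\Ccal(-,\theta)$ with $\theta=\begin{sbmatrix}f_1 & \psi\\0 & f_3\end{sbmatrix}$ for some $\psi\co A_3\to B_1$. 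Since consecutive maps of a projective presentation compose to zero, $\gamma\theta=0$, and reading off the upper-right entry gives $\omega f_3=-g_1\psi$.

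Applying $H$ produces a commutative diagram in $\Acal$ with split exact rows and vertical maps $Hg_i\co HB_i\to HC_i$ whose cokernels are $\wha{H}F_i$. The snake lemma then yields an exact sequence
\[
\Kernel(Hg_3)\ovs{\partial}{\to}\wha{H}F_1\to\wha{H}F_2\to\wha{H}F_3\to0,
\]
so it suffices to prove that the connecting map $\partial$ vanishes. In the split coordinates, $\partial$ sends $x\in\Kernel(Hg_3)$ to the class of $H\omega(x)$ in $\coker(Hg_1)=\wha{H}F_1$. By the isolated computation, $\Kernel(Hg_3)=\Image(Hf_3)$; writing $x=Hf_3(w)$ and using $\omega f_3=-g_1\psi$ gives $H\omega(x)=H(\omega f_3)(w)=-Hg_1\bigl(H\psi(w)\bigr)\in\Image(Hg_1)$. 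Hence $\partial=0$, the map $\wha{H}F_1\to\wha{H}F_2$ is a monomorphism, and $\wha{H}$ is exact.

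I expect the crux to be exactly the vanishing of $\partial$, where the two ingredients must be combined: half-exactness of $H$ supplies the identification $\Kernel(Hg_3)=\Image(Hf_3)$, while the factorization $\omega f_3=-g_1\psi$ is forced by the complex identity $\gamma\theta=0$ built into the horseshoe. Neither is sufficient on its own, and the delicate point is organizing the split presentations coherently out to degree two so that both identities are simultaneously available; this is also what makes the argument genuinely use that $H$ is half exact rather than merely additive.
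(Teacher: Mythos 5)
Your proof is correct and follows essentially the same route as the paper: apply the horseshoe lemma to the representable presentations (extended one step to the left), push the split-exact diagram through $\wti{H}$, and combine the snake lemma with half-exactness of $H$ to kill the connecting map. The only difference is presentational — you make explicit, via the matrix entries $\omega$ and $\psi$ and the identity $\omega f_3=-g_1\psi$, the vanishing of $\partial$ that the paper's proof leaves as an implicit consequence of "the snake lemma and the exactness of $HA'\ovs{Hf'}{\to}HB'\ovs{Hg'}{\to}HC'$."
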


\begin{proof}
By Proposition \ref{prop:kra}, we know that $\wha{H}$ is a right exact functor. 
Consider a short exact sequence $0\to\de^{\ast}\to\de''^{\ast}\to\de'^{\ast}\to 0$ in $\defe\Ebb$, 
and let $A\ovs{f}{\to} B\ovs{g}{\to} C\ovs{\de}{\dra}$ and $A'\ovs{f'}{\to} B'\ovs{g'}{\to} C'\ovs{\de'}{\dra}$ be $\sfra$-triangles. 
By the horseshoe lemma, we obtain the following exact commutative diagram in $\Mod\Ccal$.
  \[
  \begin{tikzcd}
    0\rar & \Ccal(-,A) \rar["\bigl(\substack{1 \\ 0}\bigr)"] \dar["f\ci-"] & \Ccal(-,A)\opl \Ccal(-,A') \rar["(01)"] \dar & \Ccal(-,A') \rar\dar["f'\ci-"] & 0  \\  
    0\rar & \Ccal(-,B) \rar["\bigl(\substack{1 \\ 0}\bigr)"] \dar["g\ci-"] & \Ccal(-,B)\opl \Ccal(-,B') \rar["(01)"] \dar & \Ccal(-,B') \rar\dar["g'\ci-"] & 0  \\
    0\rar & \Ccal(-,C) \rar["\bigl(\substack{1 \\ 0}\bigr)"] \dar & \Ccal(-,C)\opl \Ccal(-,C') \rar["(01)"] \dar & \Ccal(-,C') \rar\dar & 0  \\
    0\rar & \de^{\ast} \rar\dar & \de''^{\ast} \rar\dar & \de'^{\ast} \rar\dar & 0 \\
    \ & 0 & 0 & 0& \ 
  \end{tikzcd}
  \]
Applying $\wti{H}$ to the above, we obtain the following commutative diagram in $\Acal$. 
  \[
  \begin{tikzcd}
    0\rar & HA \rar["\bigl(\substack{1 \\ 0}\bigr)"] \dar["Hf"] & HA\opl HA' \rar["(01)"] \dar & HA' \rar\dar["Hf'"] & 0  \\  
    0\rar & HB \rar["\bigl(\substack{1 \\ 0}\bigr)"] \dar["Hg"] & HB\opl HB' \rar["(01)"] \dar & HB' \rar\dar["Hg'"] & 0  \\
    0\rar & HC \rar["\bigl(\substack{1 \\ 0}\bigr)"] \dar & HC\opl HC' \rar["(01)"] \dar & HC' \rar\dar & 0  \\
       \   & \wha{H}\de^{\ast} \rar["\varphi"]\dar & \wha{H}\de''^{\ast} \rar\dar & \wha{H}\de'^{\ast} \rar\dar & 0 \\
       \ & 0 & 0 & 0& \     
  \end{tikzcd}
  \]
Here all rows are exact, and left and right columns are exact, while the sequence $HA\oplus HA'\to HB\oplus HB'\to HC\oplus HC'$ is not necessary exact. Then it follows from the snake lemma and the exactness of the sequence $HA'\ovs{Hf'}{\to}HB'\ovs{Hg'}{\to}HC'$ that $\varphi$ is monomorphic. 
Thus $\wha{H}$ is an exact functor. 
\end{proof}

By a dual argument, we obtain the following claim:
Let $H\co\Ccal\to\Acal$ be a half exact functor. Then there exists a contravariant left exact functor $\wti{H}\co\mod\Ccal^{\op}\to\Acal$ satisfying $H\iso\wti{H}\ci Y^{\Ccal}$ where $Y^{\Ccal}$ denotes the Yoneda embedding $\Ccal\to\mod\Ccal^{\op}; C\mapsto\Ccal(C,-)$, and $\wti{H}$ restricts to a contravariant exact functor $\wch{H}\co\Ebb\mathchar`-\defe\to\Acal$. 

We obtain a duality between the category of contravariant defects and that of covariant defects. 

\begin{proposition}
There is a contravariant exact equivalence
    \[
    \begin{tikzcd}[column sep=0.5cm]
      D\co\defe\Ebb \rar["\sim"] & \Ebb\mathchar`-\defe 
    \end{tikzcd}
    \]
    which sends $\de^{\ast}$ to $\de_{\ast}$ for any $\Ebb$-extension $\de$.
\end{proposition}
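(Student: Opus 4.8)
The plan is to reduce the whole statement to a single $\Hom$-computation carried out inside the abelian groups $\Ebb(C,A')$. On objects I set $D(\de^{\ast}):=\de_{\ast}$, using the descriptions $\de^{\ast}=\im(\de_{\sharp})$ and $\de_{\ast}=\im(\de^{\sharp})$ coming from $(\ref{defect})$ and its dual. The key claim, from which everything follows, is that for $\Ebb$-extensions $\de\in\Ebb(C,A)$ and $\de'\in\Ebb(C',A')$ there are natural identifications
\[
 \Hom_{\defe\Ebb}(\de^{\ast},\de'^{\ast})=\de'^{\ast}(C)\cap\de_{\ast}(A')=\Hom_{\Ebb\mathchar`-\defe}(\de'_{\ast},\de_{\ast}),
\]
where both intersections are taken inside $\Ebb(C,A')$, with $\de'^{\ast}(C)=\im\bigl(\Ccal(C,C')\ovs{\de'_{\sharp}}{\to}\Ebb(C,A')\bigr)$ and $\de_{\ast}(A')=\im\bigl(\Ccal(A,A')\ovs{\de^{\sharp}}{\to}\Ebb(C,A')\bigr)$. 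Since the outer two groups are literally the same subgroup of $\Ebb(C,A')$, this simultaneously defines $D$ on morphisms (with the variance reversed) and exhibits it as fully faithful.

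To prove the left-hand identification I would fix an $\sfra$-triangle $A\ovs{f}{\to}B\ovs{g}{\to}C\ovs{\de}{\dra}$ and apply $\Hom_{\Mod\Ccal}(-,\de'^{\ast})$ to the right exact presentation $\Ccal(-,B)\ovs{g\ci-}{\to}\Ccal(-,C)\to\de^{\ast}\to0$ of $(\ref{defect})$. By the Yoneda lemma a morphism $\de^{\ast}\to\de'^{\ast}$ is the same datum as an element $\eta\in\de'^{\ast}(C)$ satisfying $\eta g=0$. Proposition~\ref{prop:exactseq}(2), applied to the triangle of $\de$ and evaluated at $A'$, identifies $\{\eta\in\Ebb(C,A'):\eta g=0\}=\ker(g^{\ast})$ with $\im(\de^{\sharp})=\de_{\ast}(A')$, giving the stated intersection. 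The right-hand identification is the exact dual: apply $\Hom_{\Mod\Ccal^{\op}}(-,\de_{\ast})$ to $\Ccal(B',-)\ovs{-\ci f'}{\to}\Ccal(A',-)\to\de'_{\ast}\to0$, so that a morphism $\de'_{\ast}\to\de_{\ast}$ becomes an element $\eta\in\de_{\ast}(A')$ with $f'\eta=0$, and then use Proposition~\ref{prop:exactseq}(1) for the triangle of $\de'$ to recognise $\{f'\eta=0\}=\ker(f'_{\ast})=\de'^{\ast}(C)$. The two computations land on the same subset of $\Ebb(C,A')$.

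It then remains to verify that the resulting assignment is genuinely a contravariant functor, and this is the only step I expect to require real care. Concretely, one must check that under the Yoneda identifications above the composite of two morphisms $\de^{\ast}\to\de'^{\ast}\to\de''^{\ast}$ corresponds to the element of $\de''^{\ast}(C)\cap\de_{\ast}(A'')$ that also represents the composite $\de''_{\ast}\to\de'_{\ast}\to\de_{\ast}$ in $\Ebb\mathchar`-\defe$; this is a direct naturality computation, but it is where all the variance bookkeeping lives. Well-definedness of $D$ on objects up to isomorphism is then automatic: a fully faithful functor reflects isomorphisms, so $\de^{\ast}\iso\de''^{\ast}$ forces $\de_{\ast}\iso\de''_{\ast}$.

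Finally, $D$ is essentially surjective by the very definition of $\Ebb\mathchar`-\defe$ and fully faithful by the displayed isomorphism, hence a contravariant equivalence of categories; its quasi-inverse is the symmetric construction $\de_{\ast}\mapsto\de^{\ast}$. Both $\defe\Ebb$ and $\Ebb\mathchar`-\defe$ are abelian by Theorem~\ref{thm:eno}(1) and its dual, and an equivalence of categories preserves all limits and colimits, so a contravariant equivalence interchanges kernels and cokernels. Thus $D$ is automatically exact and carries short exact sequences to short exact sequences, which yields the desired contravariant exact equivalence $D\co\defe\Ebb\to\Ebb\mathchar`-\defe$ with $D(\de^{\ast})=\de_{\ast}$.
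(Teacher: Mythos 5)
Your proof is correct, but it takes a genuinely different route from the paper's. The paper obtains $D$ from its lifting machinery: it applies Proposition~\ref{prop:liftexact} to the contravariant half exact functor $X\mapsto\Ebb(X,-)$, which immediately yields a contravariant \emph{exact} functor $\wha{F}\co\defe\Ebbb\to\Mod\Ccal^{\op}$ (so functoriality, well-definedness on objects, and exactness are automatic), then computes $D\de^{\ast}=\Kernel g^{\ast}=\Image\de^{\sha}=\de_{\ast}$ and produces a quasi-inverse by the dual construction $X\mapsto\Ebb(-,X)$. You instead build $D$ by hand from the Hom-computation $\Hom_{\defe\Ebb}(\de^{\ast},\de'^{\ast})=\de'^{\ast}(C)\cap\de_{\ast}(A')=\Hom_{\Ebb\mathchar`-\defe}(\de'_{\ast},\de_{\ast})$ inside $\Ebb(C,A')$; this identification is correct (Yoneda applied to the canonical presentation, plus Proposition~\ref{prop:exactseq} to identify $\Kernel g^{\ast}$ with $\Image\de^{\sha}$ and $\Kernel f'_{\ast}$ with $\Image\de'_{\sha}$), and it gives full faithfulness by construction, essential surjectivity by definition, and exactness from the general fact that a contravariant equivalence of abelian categories interchanges kernels and cokernels (legitimately using Theorem~\ref{thm:eno}(1) and its dual). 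The two points you flag but do not execute both go through routinely: compatibility with composition reduces to the bifunctoriality identity $(a\de')c=a(\de'c)$ --- if $\phi\co\de^{\ast}\to\de'^{\ast}$ and $\psi\co\de'^{\ast}\to\de''^{\ast}$ correspond to $\eta=\phi_{C}(\de)=\de'c$ and $\theta=\psi_{C'}(\de')=a\de'$, then $(\psi\phi)_{C}(\de)=\theta c=a\eta$, which is exactly the composite on the covariant side --- and defining $D$ on all of $\defe\Ebb$ (whose objects are merely isomorphic to literal defects) requires the standard extension along the dense subcategory of genuine images $\de^{\ast}$. What your approach buys is an explicit, concrete description of morphism groups between defects as subgroups of $\Ebb(C,A')$, which the paper's abstract construction never makes visible; what the paper's approach buys is that $D$ arrives already exact and functorial, with no bookkeeping, and reuses machinery needed elsewhere in the paper.

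(One typographical correction to the above: read $\defe\Ebb$ for $\defe\Ebbb$.)
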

\begin{proof}
Consider a functor $F$ defined by
  \[
      \begin{tikzcd}
       \Ccal \rar & \Mod\Ccal^{\op} \ ; \ X \rar[mapsto] & \Ebb(X,-)
    \end{tikzcd}
  \]
which is a contravariant half exact functor.
By Proposition~\ref{prop:liftexact}, we obtain a contravariant exact functor $D=\wha{F}\co\defe\Ebb\to\Mod\Ccal^{\op}$. For any $\Ebb$-extension $\de$, there exists an exact sequence as (\ref{defect}). By Proposition~\ref{prop:exactseq}, we have an exact sequence
  \[
    \begin{tikzcd}
       \Ccal(A,-) \rar["\de^{\sha}"] & \Ebb(C,-) \rar["g^{\ast}"]  & \Ebb(B,-).
    \end{tikzcd}
  \]
Then we have $D\de^{\ast}=\wti{F}(\Cokernel(g\ci-))=\Kernel g^{\ast}=\Image \de^{\sha}=\de_{\ast}$, and get $D\co\defe\Ebb\to\Ebb\mathchar`-\defe$. Dually, we consider a functor $G$ defined by
  \[
      \begin{tikzcd}
       \Ccal \rar & \Mod\Ccal \ ; \ X \rar[mapsto] & \Ebb(-,X).
    \end{tikzcd}
  \]
Dually, we obtain a contravariant exact functor $\wch{G}\co\Ebb\mathchar`-\defe\to\defe\Ebb$, and it is easy to check that $D$ and $\wch{G}$ are mutually quasi-inverses.  
\end{proof}

The following proposition gives other descriptions of Serre subcategories corresponding to $\Ebb^{H}_{R}$ and $\Ebb^{H}_{L}$ via the bijection in Theorem~\ref{thm:eno}.

\begin{proposition}\label{prop:serel}
Let $H\colon\Ccal\to\Acal$ be a half exact functor and $\de\in\Ebb(C, A)$ any $\Ebb$-extension. 
Then the following assertions hold. 
\begin{enumerate}
  \item $\de\in\Ebb^{H}_{R}(C, A)$ if and only if $\de^{\ast}\in\ker\wha{H}$. Hence we have $\ker\wha{H}=\defe\Ebb^{H}_{R}$. 
  \item $\de\in\Ebb^{H}_{L}(C, A)$ if and only if $\de^{\ast}\in\ker(\wch{H}\ci D)$. Hence we have $\ker(\wch{H}\ci D)=\defe\Ebb^{H}_{L}$.
\end{enumerate}
\end{proposition}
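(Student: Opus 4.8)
The plan is to reduce both statements to a direct computation of $\wha{H}$ (resp.\ $\wch{H}\ci D$) on the defect $\de^{\ast}$, and then to read off the two equivalences from the defining conditions of $\Ebb^{H}_{R}$ and $\Ebb^{H}_{L}$. Since the two parts are formally dual, I would prove (1) carefully and deduce (2) by dualizing.

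For (1), fix an $\sfra$-triangle $A\ovs{f}{\to}B\ovs{g}{\to}C\ovs{\de}{\dra}$. The defect sequence (\ref{defect}) presents $\de^{\ast}$ as the cokernel of $g\ci-\co\Ccal(-,B)\to\Ccal(-,C)$, i.e.\ as a right exact sequence $\Ccal(-,B)\ovs{g\ci-}{\to}\Ccal(-,C)\to\de^{\ast}\to 0$ in $\mod\Ccal$. By the characterization of $\wti{H}$ recorded just after Proposition~\ref{prop:kra}, applying $\wti{H}$ yields the right exact sequence $HB\ovs{Hg}{\to}HC\to\wti{H}\de^{\ast}\to 0$ in $\Acal$. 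As $\de^{\ast}\in\defe\Ebb$ we have $\wti{H}\de^{\ast}=\wha{H}\de^{\ast}$, so $\wha{H}\de^{\ast}\iso\Cokernel(Hg)$. Hence $\de^{\ast}\in\ker\wha{H}$ holds if and only if $\Cokernel(Hg)=0$, that is, if and only if $Hg$ is an epimorphism, which is exactly the condition defining $\de\in\Ebb^{H}_{R}(C,A)$. For the displayed equality I would then use that $\Ebb^{H}_{R}$ is a closed subfunctor (Proposition~\ref{prop:left}), so by Theorem~\ref{thm:eno} the subcategory $\defe\Ebb^{H}_{R}$ consists exactly of those $F\in\defe\Ebb$ with $F\iso\de^{\ast}$ for some $\de\in\Ebb^{H}_{R}$; since every object of $\defe\Ebb$ is a contravariant defect and membership in $\ker\wha{H}$ is invariant under isomorphism, the equivalence upgrades at once to $\ker\wha{H}=\defe\Ebb^{H}_{R}$.

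For (2) I would run the dual bookkeeping. By Proposition~\ref{prop:exactseq}(2) the covariant defect $\de_{\ast}$ is the cokernel of $-\ci f\co\Ccal(B,-)\to\Ccal(A,-)$, i.e.\ fits in a right exact sequence $\Ccal(B,-)\ovs{-\ci f}{\to}\Ccal(A,-)\to\de_{\ast}\to 0$ in $\mod\Ccal^{\op}$. Applying the contravariant left exact functor $\wti{H}\co\mod\Ccal^{\op}\to\Acal$ (the one with $H\iso\wti{H}\ci Y^{\Ccal}$) turns this cokernel presentation into a left exact sequence $0\to\wch{H}\de_{\ast}\to HA\ovs{Hf}{\to}HB$, so $\wch{H}\de_{\ast}\iso\ker(Hf)$. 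Since $D\de^{\ast}=\de_{\ast}$ by the preceding proposition, $(\wch{H}\ci D)\de^{\ast}\iso\ker(Hf)$, which vanishes if and only if $Hf$ is a monomorphism, that is, if and only if $\de\in\Ebb^{H}_{L}(C,A)$; the equality $\ker(\wch{H}\ci D)=\defe\Ebb^{H}_{L}$ then follows exactly as in (1).

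The computations themselves are routine, so the only step needing genuine care is the dual bookkeeping in (2): I must make sure that the covariant defect really appears as the cokernel of $-\ci f$ in $\mod\Ccal^{\op}$, that the \emph{contravariant} functor $\wti{H}$ sends this presentation to the \emph{kernel} of $Hf$ rather than a cokernel, and that $D$ indeed carries $\de^{\ast}$ to $\de_{\ast}$, since a slip in any of these dualizations would interchange the roles of ``epimorphism/right exact'' and ``monomorphism/left exact'' and so swap the conclusions of (1) and (2).
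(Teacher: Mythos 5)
Your proof is correct and follows essentially the same route as the paper's: both reduce (1) to the right exact sequence $HB\ovs{Hg}{\to}HC\to\wha{H}\de^{\ast}\to 0$ obtained by applying $\wti{H}$ to the defect presentation, and (2) to the dual left exact sequence $0\to(\wch{H}\ci D)(\de^{\ast})\to HA\ovs{Hf}{\to}HB$ via $D\de^{\ast}=\de_{\ast}$. The paper's proof is just a two-line citation of these sequences; you have merely filled in the bookkeeping (including the isomorphism-invariance argument upgrading the pointwise equivalence to the equality $\ker\wha{H}=\defe\Ebb^{H}_{R}$), and all of it is accurate.
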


\begin{proof}
Let $A\ovs{f}{\to}B\ovs{g}{\to}C\ovs{\de}{\dra}$ be an $\sfra$-triangle. 

(1) follows from the exactness of the sequence $HB\ovs{Hg}{\to}HC\to\wha{H}\de^{\ast}\to 0$. 
Similarly, (2) follows from the sequence $0\to(\wch{H}\ci D)(\de^{\ast})\to HA\ovs{Hf}{\to}HB$.
\end{proof}

Proposition~\ref{prop:serel} implies that the following diagram is commutative
    \[
    \begin{tikzcd}[column sep=2cm]
      \Hex(\Ccal) \ar[rd,"H\mapsto\Ebb_{R}^{H}",yshift=0.5ex] \dar["H\mapsto\ker\wha{H}"'] & \\
      \Serre(\defe\Ebb) \rar["\Scal\mapsto\Fbb(\Scal)", shift left] & [2em] \Rel(\Ebb) \lar["\Fbb\mapsto\defe\Fbb", shift left] 
    \end{tikzcd}
    \] 
where $\Hex(\Ccal)$ denotes the class of all half exact functors from $\Ccal$ to abelian categories. 

Next we discuss when two half exact functors define the same closed subfunctor. The following definition is an analogue of \cite[Definition 6.1]{SS}.

\begin{definition}
Let $H\co\Ccal\to\Acal$ and $H'\co\Ccal\to\Acal'$ be half exact functors. We say $H'$ is a \emph{faithfully exact reduction} of $H$ if there exists a faithful exact functor $F\co\Acal\to\Acal'$ satisfying $H'\iso F\ci H$. 
\end{definition}

\begin{proposition}\label{prop:same}
Let $H$ and $H'$ be half exact functors. If $H'$ is a faithfully exact reduction of $H$, then $\Ebb_R^{H}=\Ebb_R^{H'}$ holds.
\end{proposition}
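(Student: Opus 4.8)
The plan is to reduce the equality $\Ebb_R^{H}=\Ebb_R^{H'}$ to a purely categorical property of the faithful exact functor $F\co\Acal\to\Acal'$, namely that $F$ both \emph{preserves} and \emph{reflects} epimorphisms. Fix an $\Ebb$-extension $\de\in\Ebb(C,A)$ together with a realization $A\ovs{f}{\to}B\ovs{g}{\to}C\ovs{\de}{\dra}$. By definition $\de\in\Ebb_R^{H}(C,A)$ if and only if $Hg$ is an epimorphism in $\Acal$, and $\de\in\Ebb_R^{H'}(C,A)$ if and only if $H'g$ is an epimorphism in $\Acal'$ (and the well-definedness noted in the definition of $\Ebb_R^{H}$ guarantees independence of the chosen realization). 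Since $H'\iso F\ci H$, the morphism $H'g$ is isomorphic to $F(Hg)$, and being an epimorphism is invariant under isomorphism. Hence the whole statement comes down to the claim that $Hg$ is an epimorphism in $\Acal$ if and only if $F(Hg)$ is an epimorphism in $\Acal'$.

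First I would isolate the following lemma: a faithful exact functor $F\co\Acal\to\Acal'$ between abelian categories preserves and reflects epimorphisms. Preservation is immediate, since an exact functor is additive and preserves cokernels: for any morphism $\phi$ we have $\coker(F\phi)\iso F(\coker\phi)$, so if $\phi$ is an epimorphism then $\coker\phi=0$ and thus $\coker(F\phi)\iso F(0)=0$, whence $F\phi$ is an epimorphism. The reflection direction is where faithfulness enters, and it is the main (though mild) obstacle. Suppose $F\phi$ is an epimorphism, so that $0=\coker(F\phi)\iso F(\coker\phi)$. It then suffices to show that a faithful additive functor reflects zero objects: if $FX\iso 0$ then $X\iso 0$. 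Indeed, the identity $\id_X$ and the zero endomorphism $0_X$ both map under $F$ to the zero endomorphism of $FX\iso0$, and faithfulness forces $\id_X=0_X$, hence $X\iso0$. Applying this with $X=\coker\phi$ gives $\coker\phi=0$, i.e.\ $\phi$ is an epimorphism.

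Finally I would assemble the pieces. Combining the lemma with the identification $H'g\iso F(Hg)$ shows that $Hg$ is an epimorphism in $\Acal$ if and only if $H'g$ is an epimorphism in $\Acal'$, for every realization and hence for every $\de$. Therefore $\Ebb_R^{H}(C,A)=\Ebb_R^{H'}(C,A)$ for all $A,C\in\Ccal$, which yields $\Ebb_R^{H}=\Ebb_R^{H'}$.
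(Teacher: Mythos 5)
Your proof is correct, and it takes a genuinely more elementary route than the paper's. You work directly with the definition of $\Ebb_R^{H}$: since $H'\iso F\ci H$ gives $H'g\iso F(Hg)$, everything reduces to your lemma that a faithful exact functor $F\co\Acal\to\Acal'$ preserves and reflects epimorphisms, which you prove correctly via $\coker(F\phi)\iso F(\coker\phi)$ together with the fact that a faithful additive functor reflects zero objects. The paper instead routes the statement through its functor-category machinery: by the uniqueness in Proposition~\ref{prop:kra} one has $\wti{H'}\iso F\ci\wti{H}$, faithfulness of $F$ gives $\ker\wti{H'}=\ker\wti{H}$ (the same reflection-of-zero-objects fact, applied one level up, to objects of $\mod\Ccal$ rather than to cokernels in $\Acal$), and then intersecting with $\defe\Ebb$ and invoking Proposition~\ref{prop:serel} translates the equality of kernels into $\Ebb_R^{H}=\Ebb_R^{H'}$. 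The two arguments are secretly parallel, since $\wha{H}(\de^{\ast})\iso\coker(Hg)$, so your epimorphism lemma is exactly the unwound form of the paper's kernel equality. What your version buys is self-containedness: it needs neither Proposition~\ref{prop:kra} nor Proposition~\ref{prop:serel}, only the well-definedness already noted in the definition of $\Ebb_R^{H}$. What the paper's version buys is that it proves the stronger, more structured statement $\ker\wha{H}=\ker\wha{H'}$ as an equality of Serre subcategories of $\defe\Ebb$, which is the form that feeds directly into Proposition~\ref{prop:fer} and the classification in Theorem~\ref{thm:tri}.
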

\begin{proof}
Assume there exists a faithful exact functor $F\co\Acal\to\Acal'$ such that $F\ci H\iso H'$. By Proposition~\ref{prop:kra}, we have $\wti{H'}\iso F\ci\wti{H}$. Since $F$ is a faithful functor, we obtain $\ker\wti{H'}=\ker\wti{H}$. Hence $\ker\wha{H'}=\ker\wti{H'}\cap\defe\Ebb=\ker\wti{H}\cap\defe\Ebb=\ker\wha{H}$ holds. By Proposition~\ref{prop:serel}, we obtain the desired result.
\end{proof}

We end this section by giving the classification of closed subfunctors of $\CEs$ via projectivization functors. The following definition is required for the definition of projectivization functors (Definition~\ref{def:projectivize}).

\begin{definition}$($\cite[Definitions 3.23, 3.25]{NP}$)$
An object $P\in\Ccal$ is \emph{projective} if we have $\Ebb(P, C)=0$ for any $C\in\Ccal$. Denote by $\Pcal$ the subcategory consisting of all projective objects. 
An extriangulated category $\CEs$ \emph{has enough projective objects} if for any $C\in\Ccal$, there is an $\sfra$-triangle $A\to P\to C\dra$ such that $P$ is projective. We denote by $\un{\Ccal}$ the ideal quotient of $\Ccal$ by the ideal consisting of all morphisms which factor through projective objects.
\end{definition}

Note that all triangulated categories have enough projective objects and the subcategory $\Pcal$ is zero.
To give the classification result, we assume that $\CEs$ has enough projective objects in the rest of this section.
Thanks to the assumption, we obtain the following proposition.

\begin{proposition}\label{prop:gnp}$($\cite[Proposition 6.5]{GNP}$)$
If $\CEs$ has enough projective objects, then the inclusion $\iota\co\defe\Ebb\to\mod\Ccal$ has a left adjoint functor. 
In this case, the left adjoint functor $\Ga\co\mod\Ccal\to\defe\Ebb$ of $\iota$ sends $\Ccal(-,X)$ to $\un{\Ccal}(-,X)$.
\end{proposition}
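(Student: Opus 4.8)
The plan is to prove the two assertions separately: first that $\un{\Ccal}(-,X)$ really lies in $\defe\Ebb$, and then that the assignment $\Ccal(-,X)\mapsto\un{\Ccal}(-,X)$ extends to a functor $\Ga$ that is left adjoint to $\iota$. I would begin by recording that defects vanish on projective objects: for any $\Ebb$-extension $\rho\in\Ebb(C',A')$ with $\sfra$-triangle $A'\ovs{f'}{\to}B'\ovs{g'}{\to}C'\ovs{\rho}{\dra}$ and any projective $P$, Proposition~\ref{prop:exactseq} yields the exact sequence $\Ccal(P,B')\ovs{g'\ci-}{\to}\Ccal(P,C')\to\Ebb(P,A')=0$, so $g'\ci-$ is epic at $P$ and hence $\rho^{\ast}(P)=0$. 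Dually, using enough projectivity I would fix for each $X$ an $\sfra$-triangle $A\ovs{f}{\to}P\ovs{g}{\to}X\ovs{\de}{\dra}$ with $P$ projective and show $\de^{\ast}\iso\un{\Ccal}(-,X)$. By the presentation $(\ref{defect})$, $\de^{\ast}$ is the cokernel of $g\ci-\co\Ccal(-,P)\to\Ccal(-,X)$, so it suffices to identify the image of $g\ci-$ with the subfunctor of morphisms factoring through a projective object. One inclusion is immediate; conversely, given $\phi\co Y\to X$ factoring through a projective $Q$, the projectivity of $Q$ together with the fact that $g$ is an $\sfra$-deflation let me lift the relevant component along $g$ (again via Proposition~\ref{prop:exactseq} and $\Ebb(Q,-)=0$), so that $\phi$ factors through $g$. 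This gives $\un{\Ccal}(-,X)\iso\de^{\ast}\in\defe\Ebb$.

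For the functor itself, I would use that $\defe\Ebb$ is abelian by Theorem~\ref{thm:eno}(1), hence has cokernels, and that $X\mapsto\un{\Ccal}(-,X)$ is an additive functor $\Ccal\to\defe\Ebb$. Proposition~\ref{prop:kra} then supplies a unique right exact functor $\Ga\co\mod\Ccal\to\defe\Ebb$ with $\Ga(\Ccal(-,X))\iso\un{\Ccal}(-,X)$, which is the stated value.

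To establish $\Ga\dashv\iota$ I would exhibit the reflections directly. On representables the unit is the canonical epimorphism $\Ccal(-,X)\defl\un{\Ccal}(-,X)$: for any $N\in\defe\Ebb$ and any $\phi\co\Ccal(-,X)\to N$, the composite $\phi\ci(g\ci-)$ corresponds under the Yoneda lemma to an element of $N(P)=0$, so $\phi$ factors uniquely through the cokernel $\un{\Ccal}(-,X)$; this is precisely a natural isomorphism $\Hom_{\defe\Ebb}(\un{\Ccal}(-,X),N)\iso\Hom_{\mod\Ccal}(\Ccal(-,X),N)$. For general $M\in\mod\Ccal$ I would take a projective presentation $\Ccal(-,B)\to\Ccal(-,C)\to M\to0$, apply the right exact $\Ga$ to write $\Ga M$ as the cokernel of $\un{\Ccal}(-,B)\to\un{\Ccal}(-,C)$, and compare the two left exact sequences obtained by applying $\Hom(-,N)$; the representable case together with the five lemma gives $\Hom_{\defe\Ebb}(\Ga M,N)\iso\Hom_{\mod\Ccal}(M,N)$, naturally in $M$ and $N$, which is the desired adjunction.

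I expect the crux to be the identification $\de^{\ast}\iso\un{\Ccal}(-,X)$, and within it the lifting step showing that a morphism factoring through an arbitrary projective already factors through the fixed deflation $g\co P\to X$; this is exactly where the enough-projectivity hypothesis does the real work. The construction of $\Ga$ and the verification of the adjunction are then formal, resting on Proposition~\ref{prop:kra}, Theorem~\ref{thm:eno} and the Yoneda lemma.
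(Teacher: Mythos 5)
The paper does not actually prove this proposition: it is quoted from \cite[Proposition 6.5]{GNP}, so there is no internal argument to compare yours against. Your proposal is a correct, self-contained proof, and its skeleton is the expected one. The two substantive points are exactly right: (i) every contravariant defect vanishes on projectives, by evaluating the exact sequence of Proposition~\ref{prop:exactseq}(1) at $P$ and using $\Ebb(P,A')=0$; and (ii) for an $\sfra$-triangle $A\to P\ovs{g}{\to}X\ovs{\de}{\dra}$ with $P$ projective, the image of $g\ci -$ coincides with the ideal of morphisms factoring through \emph{some} projective --- your lifting step, where a factorization $Y\to Q\to X$ through an arbitrary projective $Q$ is pushed along $g$ using the surjectivity of $\Ccal(Q,P)\to\Ccal(Q,X)$ coming from $\Ebb(Q,A)=0$, is the crux, and it is where enough projectivity enters. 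This gives $\un{\Ccal}(-,X)\iso\de^{\ast}\in\defe\Ebb$. The rest is formal, as you say: Proposition~\ref{prop:kra} (with $\defe\Ebb$ abelian by Theorem~\ref{thm:eno}(1), hence with cokernels) produces the right exact functor $\Ga$, and the adjunction reduces via presentations and a kernel comparison to the representable case, where it is precisely the statement that any $\phi\co\Ccal(-,X)\to N$ with $N\in\defe\Ebb$ kills morphisms factoring through projectives, by (i) and Yoneda. Two points you leave implicit are routine but worth recording if you write this up: the fullness of $\defe\Ebb$ in $\mod\Ccal$, which lets you identify $\Hom_{\defe\Ebb}(\un{\Ccal}(-,X),N)$ with $\Hom_{\mod\Ccal}(\un{\Ccal}(-,X),N)$, and the naturality in $M$ of the isomorphism $\Hom_{\defe\Ebb}(\Ga M,N)\iso\Hom_{\mod\Ccal}(M,N)$, which requires the standard lifting of a morphism $M\to M'$ to a morphism of projective presentations.
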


Note that $\defe\Ebb=\mod\Ccal$ and $\Ga=\Id$ hold when $\CEs$ is a triangulated category. The following proposition is an analogous result of Proposition~\ref{prop:kra}.

\begin{proposition}\label{prop:defhex}
\begin{enumerate}
\item The functor $\Ga\ci Y_{\Ccal}\co\Ccal\to\defe\Ebb$ is a half exact functor which sends all objects in $\Pcal$ to zero.
\item For any half exact functor $H\co\Ccal\to\Acal$ which satisfies $H(P)=0$ for each $P\in\Pcal$, there exists a unique exact functor $\wha{H}\co\defe\Ebb\to\Acal$ which makes the following diagram commute up to a natural isomorphism, and such a functor is unique up to a natural isomorphism.
  \[
  \begin{tikzcd}
    \Ccal \rar["\Ga\ci Y_{\Ccal}"] \ar[rd, "H"'] & \defe\Ebb \dar["\wha{H}"] \\
    \ & \Acal
  \end{tikzcd}
  \]
\end{enumerate}  
\end{proposition}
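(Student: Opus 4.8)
The plan is to handle the two parts in order, using Proposition~\ref{prop:gnp} to identify $\Ga\ci Y_{\Ccal}$ concretely and then reducing part (2) to formal manipulations with the adjunction $\Ga\dashv\iota$ together with Propositions~\ref{prop:kra} and \ref{prop:liftexact}. The only genuinely computational step is the half exactness in part (1); everything in part (2) is bookkeeping with universal properties.

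For part (1), Proposition~\ref{prop:gnp} gives $(\Ga\ci Y_{\Ccal})(X)=\un{\Ccal}(-,X)\in\defe\Ebb$. Vanishing on $\Pcal$ is immediate: for $P\in\Pcal$ every morphism $Y\to P$ factors through the projective object $P$, so $\un{\Ccal}(Y,P)=0$ for all $Y$, whence $\un{\Ccal}(-,P)=0$. The substantive point is half exactness. Fix an $\sfra$-triangle $A\ovs{f}{\to}B\ovs{g}{\to}C\ovs{\de}{\dra}$ and set $u=(\Ga\ci Y_{\Ccal})(f)$, $v=(\Ga\ci Y_{\Ccal})(g)$. Since $gf=0$ we have $vu=0$, so $\im u\se\ker v$. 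Because $\defe\Ebb$ is a Serre subcategory of $\coh\Ccal$ (Theorem~\ref{thm:eno}(1)) and all the inclusions into $\Mod\Ccal$ are exact, exactness at $\un{\Ccal}(-,B)$ may be tested in $\Mod\Ccal$, hence objectwise. So I must show: if $\beta\co Y\to B$ satisfies $[g\beta]=0$ in $\un{\Ccal}(Y,C)$, then $[\beta]\in\im u_{Y}$. Write $g\beta=st$ with $t\co Y\to Q$, $s\co Q\to C$ and $Q\in\Pcal$; since $\Ebb(Q,A)=0$, Proposition~\ref{prop:exactseq}(1) shows $\Ccal(Q,B)\defl\Ccal(Q,C)$, so $s$ lifts to $\wti{s}\co Q\to B$ with $g\wti{s}=s$. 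Then $g(\beta-\wti{s}t)=0$, and objectwise exactness of $\Ccal(Y,A)\to\Ccal(Y,B)\to\Ccal(Y,C)$ (Proposition~\ref{prop:exactseq}(1)) writes $\beta-\wti{s}t=f\alpha$; as $\wti{s}t$ factors through $Q\in\Pcal$, we get $[\beta]=u_{Y}[\alpha]$. This lifting step is the main obstacle: since $\Ga$ is merely right exact (being a left adjoint), one cannot simply apply it to the six–term sequence of Proposition~\ref{prop:exactseq}(1) and read off middle exactness, and the projectivity of $Q$ is precisely what repairs the loss of left exactness.

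For part (2), I take $\wha{H}=\wti{H}\ci\iota$, where $\wti{H}\co\mod\Ccal\to\Acal$ is the right exact functor of Proposition~\ref{prop:kra}; by Proposition~\ref{prop:liftexact} it is exact on $\defe\Ebb$. For commutativity, the unit $\eta\co\Id_{\mod\Ccal}\to\iota\Ga$ induces $\wti{H}\eta\co\wti{H}\to\wti{H}\ci\iota\Ga=\wha{H}\ci\Ga$. Evaluating on a representable $\Ccal(-,X)$ and using the presentation $\Ccal(-,P_{X})\to\Ccal(-,X)\to\un{\Ccal}(-,X)\to0$ arising from a projective deflation $P_{X}\defl X$ (here the image of the first map is exactly the morphisms factoring through a projective, again by the lifting in the previous paragraph), right exactness of $\wti{H}$ together with $H(P_{X})=0$ forces $\wti{H}\eta$ to be an isomorphism on representables, hence everywhere, since $\wha{H}$ is exact and $\Ga$ is right exact. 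Thus $\wha{H}\ci\Ga Y_{\Ccal}\iso\wti{H}\ci Y_{\Ccal}\iso H$. For uniqueness, if $H'\co\defe\Ebb\to\Acal$ is exact with $H'\ci\Ga Y_{\Ccal}\iso H$, then $H'\ci\Ga$ and $\wha{H}\ci\Ga$ are both right exact functors $\mod\Ccal\to\Acal$ whose composite with $Y_{\Ccal}$ is isomorphic to $H$, so the uniqueness clause of Proposition~\ref{prop:kra} gives $H'\ci\Ga\iso\wha{H}\ci\Ga$; precomposing with $\iota$ and using that the counit $\Ga\iota\iso\Id_{\defe\Ebb}$ is an isomorphism (as $\iota$ is fully faithful with left adjoint $\Ga$) yields $H'\iso\wha{H}$.
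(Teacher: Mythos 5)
Your proposal is correct, and part (2) coincides with the paper's own argument: the paper also sets $\wha{H}=\wti{H}\ci\iota$, proves commutativity by applying $\wti{H}$ to the right exact sequence $\Ccal(-,P)\to\Ccal(-,C)\to\un{\Ccal}(-,C)\to0$ coming from a projective deflation and using $HP=0$, and proves uniqueness via the uniqueness clause of Proposition~\ref{prop:kra} together with $\Ga\ci\iota\iso\Id_{\defe\Ebb}$ (your phrasing through $H'\ci\Ga\iso\wha H\ci\Ga$ versus the paper's $\wti H\iso G\ci\Ga$ is the same computation). The genuine difference is in part (1): the paper disposes of it with a citation to \cite[Lemma 1.27]{INP}, whereas you prove half exactness of $\Ga\ci Y_{\Ccal}$ from scratch. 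Your argument is sound: exactness in $\defe\Ebb$ may indeed be tested objectwise in $\Mod\Ccal$, since $\defe\Ebb$ is a Serre subcategory of the abelian category $\coh\Ccal$ and $\coh\Ccal$ embeds exactly in $\Mod\Ccal$ (your phrase ``all the inclusions into $\Mod\Ccal$ are exact'' should be restricted to this chain, as $\mod\Ccal$ need not be abelian); and the key step --- lifting $s\co Q\to C$ through $g$ using $\Ebb(Q,A)=0$ and the exact sequence of Proposition~\ref{prop:exactseq}(1), then correcting $\beta$ by $\wti{s}t$ --- is exactly the mechanism that makes the stable Yoneda functor half exact when there are enough projectives. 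What your route buys is a self-contained proof of (1), at the cost of redoing a known lemma; incidentally, the same lifting argument also justifies (as you note parenthetically) why $\un{\Ccal}(-,X)$ is the cokernel of $\Ccal(-,P_X)\to\Ccal(-,X)$, a fact both you and the paper use in part (2) and which the paper leaves implicit in Proposition~\ref{prop:gnp}. One cosmetic caveat: in part (2) your claim that $\wti{H}\eta$ is invertible ``everywhere'' is unnecessary for the statement (only its restriction to representables is needed) and would require a short extra argument, so it is best treated as an aside rather than a step of the proof.
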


\begin{proof}
(1) This follows immediately from  \cite[Lemma 1.27]{INP}.

(2) Let $(\iota,\Ga)$ be the adjoint pair obtained in Proposition~\ref{prop:gnp} and $\eta\co\Id\Rightarrow\iota\ci\Ga$ the unit morphism. Then we have $\wha{H}=\wti{H}\ci\iota$. We show that $\varphi=\wti{H}\ci\eta\ci Y_\Ccal\co\wti{H}\ci Y_\Ccal\Rightarrow\wti{H}\ci\iota\ci\Ga\ci Y_\Ccal$ is an isomorphism. For any $C\in\Ccal$, there exists an $\sfra$-triangle $A\to P\ovs{g}{\to}C\dra$ with a projective object $P$. Then we obtain the right exact sequence
    \[
    \begin{tikzcd}
       \Ccal(-,P) \rar["g\ci-"] & \Ccal(-, C) \rar["(\eta\ci Y_\Ccal)_C"] & \un{\Ccal}(-,C)\rar & 0
    \end{tikzcd}
    \]
in $\mod\Ccal$. Applying $\wti{H}$ to this sequence, we obtain the exact sequence
    \[
    \begin{tikzcd}
       HP \rar["Hg"] & HC \rar["\varphi_C"] & \wti{H}(\un{\Ccal}(-,C))\rar & 0
    \end{tikzcd}
    \]
in $\Acal$. By the assumption, we have $HP=0$. Hence $\varphi_C$ is an isomorphism. 

We show the uniqueness of such $\wha{H}$. Let $G\co\defe\Ebb\to\Acal$ be an exact functor satisfying $H\iso G\ci\Ga\ci Y_\Ccal$. Since $\Ga$ is a right exact functor, we obtain $\wti{H}\iso G\ci\Ga$ by the uniqueness in Proposition~\ref{prop:kra}. Thus we have $\wha{H}=\wti{H}\ci\iota\iso G\ci\Ga\ci\iota\iso G$.
\end{proof}

The following notion is considered in \cite[Definition 3.3]{Bel} for triangulated categories, which we adapt to extriangulated categories.

\begin{definition}\label{def:projectivize}
A half exact functor $H$ is called a \emph{projectivization functor} if $H=Q\ci\Ga\ci Y_\Ccal$ holds where $Q\co\defe\Ebb\to(\defe\Ebb)/\Scal$ is a Serre quotient functor for some Serre subcategory $\Scal$ of $\defe\Ebb$.
\end{definition}

\begin{proposition}\label{prop:fer}
Let $H\colon\Ccal\to\Acal$ be a half exact functor. Assume that $H(P)=0$ holds for any $P\in\Pcal$. Then $H$ is a faithfully exact reduction of a projectivization functor. 
\end{proposition}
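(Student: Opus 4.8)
The plan is to manufacture the required projectivization functor directly out of $H$, by passing through the exact lift of $H$ to the defect category and then quotienting by its kernel. First I would invoke Proposition~\ref{prop:defhex}(2): since $H$ is half exact and $H(P)=0$ for every $P\in\Pcal$, it factors, up to natural isomorphism, as $H\iso\wha{H}\ci\Ga\ci Y_{\Ccal}$ for a unique exact functor $\wha{H}\co\defe\Ebb\to\Acal$. Here $\defe\Ebb$ is abelian by Theorem~\ref{thm:eno}(1), so exactness of $\wha{H}$ is meaningful.

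Next I would set $\Scal:=\ker\wha{H}$, the full subcategory of objects annihilated by $\wha{H}$. Because $\wha{H}$ is exact between abelian categories, a short exact sequence is sent to a short exact sequence, so $\Scal$ is closed under subobjects, quotients and extensions and is therefore a Serre subcategory of $\defe\Ebb$. This lets me form the Serre quotient functor $Q\co\defe\Ebb\to(\defe\Ebb)/\Scal$ and declare $H_{0}:=Q\ci\Ga\ci Y_{\Ccal}$ to be a projectivization functor in the sense of Definition~\ref{def:projectivize}. By the universal property of the Gabriel quotient by the kernel of an exact functor, $\wha{H}$ factors uniquely as $\wha{H}\iso\ov{H}\ci Q$ for an exact functor $\ov{H}\co(\defe\Ebb)/\Scal\to\Acal$. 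Composing, $H\iso\wha{H}\ci\Ga\ci Y_{\Ccal}\iso\ov{H}\ci Q\ci\Ga\ci Y_{\Ccal}=\ov{H}\ci H_{0}$, so to exhibit $H$ as a faithfully exact reduction of the projectivization functor $H_{0}$ it remains only to check that $\ov{H}$ is faithful.

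The faithfulness of $\ov{H}$ is the one step that is not a purely formal consequence of the cited results, and I expect it to be the main content of the argument. I would argue it using the calculus of fractions presentation of $(\defe\Ebb)/\Scal$: an arbitrary morphism there can be written as $\bar{s}^{-1}\bar{\varphi}$, where $\varphi$ is a morphism of $\defe\Ebb$ and $s$ has kernel and cokernel in $\Scal$, so that $\bar{s}=Q(s)$ is invertible. Since $\wha{H}$ is exact and $\ker s,\coker s\in\Scal=\ker\wha{H}$, the morphism $\wha{H}(s)$ is an isomorphism, whence $\ov{H}(\bar{s}^{-1}\bar{\varphi})=\wha{H}(s)^{-1}\wha{H}(\varphi)$ vanishes if and only if $\wha{H}(\varphi)=0$. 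Exactness of $\wha{H}$ then gives $\wha{H}(\im\varphi)=\im\wha{H}(\varphi)=0$, i.e. $\im\varphi\in\Scal$; hence $\varphi$ factors through an object isomorphic to $0$ in the quotient, forcing $\bar{\varphi}=0$ and so $\bar{s}^{-1}\bar{\varphi}=0$. This establishes that $\ov{H}$ is faithful and exact, and the factorization $H\iso\ov{H}\ci H_{0}$ then realizes $H$ as a faithfully exact reduction of the projectivization functor $H_{0}$, completing the proof.
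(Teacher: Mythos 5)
Your proposal is correct and takes essentially the same route as the paper: both arguments first invoke Proposition~\ref{prop:defhex} to write $H\iso\wha{H}\ci\Ga\ci Y_{\Ccal}$ with $\wha{H}\co\defe\Ebb\to\Acal$ exact, and then factor $\wha{H}$ through the Serre quotient of $\defe\Ebb$ by its kernel, so that $H$ becomes a faithful exact functor composed with the projectivization functor $Q\ci\Ga\ci Y_{\Ccal}$. The only difference is that the paper cites \cite[Lemma 3.2]{SS} for the factorization $\wha{H}=J\ci Q$ with $Q$ a Serre quotient functor and $J$ faithful exact, whereas you prove exactly this statement by hand (Serre-ness of $\Scal=\ker\wha{H}$, Gabriel's universal property of the quotient, and faithfulness of the induced functor via the calculus-of-fractions description), which makes your argument self-contained but otherwise identical.
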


\begin{proof}
By Proposition~\ref{prop:defhex}, we obtain an exact functor $\wha{H}\co\defe\Ebb\to\Acal$ such that $H\iso\wha{H}\ci\Ga\ci Y_\Ccal$. By \cite[Lemma 3.2]{SS}, there exists a factorization $\wha{H}=J\ci Q$, where $Q$ is a Serre quotient functor and $J$ is a faithful exact functor. Thus we have $H\iso J\ci(Q\ci\Ga\ci Y_\Ccal)$ as desired. 
\end{proof}

Combining Proposition~\ref{prop:same} and the previous proposition, it turns out that every half exact functor which sends all projective objects to zero defines a closed subfunctor arising from a certain projectivization functor.

The following is our main result.

\begin{theorem}\label{thm:tri}
Let $\CEs$ be an extriangulated category which has enough projective objects and $\Pcal$ the subcategory consisting of projective objects.
Then there exist bijective correspondences between
\begin{enumerate}
  \item projectivization functors
  \item Serre subcategories of $\defe\Ebb$
  \item closed subfunctors of $\Ebb$
\end{enumerate}
as in the following commutative diagram
    \[
    \begin{tikzcd}[column sep=2cm]
      \PFun(\Ccal)\ar[rd,"H\mapsto\Ebb_{R}^{H}",yshift=0.5ex] \dar["H\mapsto\ker\wha{H}"'] & \\
      \Serre(\defe\Ebb) \rar["\Scal\mapsto\Fbb(\Scal)", shift left] & [2em] \Rel(\Ebb) \lar["\Fbb\mapsto\defe\Fbb", shift left]
    \end{tikzcd}
    \] 
where $\PFun(\Ccal)$ denotes the set of projectivization functors from $\Ccal$.
\end{theorem}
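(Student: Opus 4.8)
The plan is to treat the bottom horizontal correspondence as already established and to reduce the whole statement to the left vertical map together with the commutativity of the triangle. The bijection between $\Serre(\defe\Ebb)$ and $\Rel(\Ebb)$ given by $\Scal\mapsto\Fbb(\Scal)$ and $\Fbb\mapsto\defe\Fbb$ is precisely Theorem~\ref{thm:eno}, so nothing new is required there. It then suffices to prove that $H\mapsto\ker\wha{H}$ is a bijection from $\PFun(\Ccal)$ onto $\Serre(\defe\Ebb)$ and that the triangle commutes; the composite bijection between (1) and (3) is then automatic.

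First I would check that $H\mapsto\ker\wha{H}$ is well defined. For $H\in\PFun(\Ccal)$ the functor $\wha{H}$ produced by Proposition~\ref{prop:defhex} is exact, and its target (a Serre quotient of $\defe\Ebb$) is abelian; since $\defe\Ebb$ is abelian by Theorem~\ref{thm:eno}(1) and the kernel of an exact functor between abelian categories is a Serre subcategory, $\ker\wha{H}\in\Serre(\defe\Ebb)$. To produce an inverse, for each Serre subcategory $\Scal$ I would set $H_\Scal:=Q_\Scal\ci\Ga\ci Y_\Ccal$, where $Q_\Scal\co\defe\Ebb\to(\defe\Ebb)/\Scal$ is the Serre quotient; this is a projectivization functor by definition. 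The key computation is $\ker\wha{H_\Scal}=\Scal$: since $\Ga\ci Y_\Ccal$ kills projectives by Proposition~\ref{prop:defhex}(1) and $Q_\Scal$ is additive, $H_\Scal$ kills projectives, so Proposition~\ref{prop:defhex}(2) applies and its uniqueness clause forces $\wha{H_\Scal}\iso Q_\Scal$; hence $\ker\wha{H_\Scal}=\ker Q_\Scal=\Scal$. The same uniqueness argument, applied to an arbitrary projectivization functor $H=Q_\Scal\ci\Ga\ci Y_\Ccal$, shows $\wha{H}\iso Q_\Scal$ and therefore $\ker\wha{H}=\Scal$, so the two assignments are mutually inverse, up to the canonical equivalence identifying the various realizations of a Serre quotient.

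It remains to verify the commutativity of the triangle, namely $\Ebb_R^H=\Fbb(\ker\wha{H})$ for every $H\in\PFun(\Ccal)$. Proposition~\ref{prop:serel}(1) gives $\ker\wha{H}=\defe\Ebb_R^H$, and $\Ebb_R^H\in\Rel(\Ebb)$ by Proposition~\ref{prop:left}(1). Applying $\Fbb(-)$ and using that $\Fbb\ci\defe$ is the identity on $\Rel(\Ebb)$ (Theorem~\ref{thm:eno}(2)) yields $\Fbb(\ker\wha{H})=\Fbb(\defe\Ebb_R^H)=\Ebb_R^H$, as required.

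The step I expect to be the main obstacle is the identification $\wha{H}\iso Q_\Scal$, since it is the one place where the uniqueness part of Proposition~\ref{prop:defhex}(2) must be invoked with care, and where bijectivity at the level of (1) hinges on treating projectivization functors up to the canonical equivalence of Serre quotient categories rather than on the nose. Everything else is an assembly of Theorem~\ref{thm:eno}, Proposition~\ref{prop:serel}, and Proposition~\ref{prop:defhex}.
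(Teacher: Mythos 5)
Your proposal is correct and takes essentially the same route as the paper: the (2)--(3) bijection is quoted from Theorem~\ref{thm:eno}, the (1)--(2) bijection is extracted from Proposition~\ref{prop:defhex} (with its uniqueness clause identifying $\wha{H}$ with the Serre quotient functor), and the commutativity of the triangle is deduced from Proposition~\ref{prop:serel} together with the fact that $\Fbb\mapsto\defe\Fbb$ and $\Scal\mapsto\Fbb(\Scal)$ are mutually inverse. In fact the paper's own proof is just a citation of these three results, so your write-up supplies details (well-definedness of $H\mapsto\ker\wha{H}$, the inverse $\Scal\mapsto Q_\Scal\ci\Ga\ci Y_\Ccal$, and the caveat about realizations of Serre quotients) that the paper leaves implicit.
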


\begin{proof}
The bijection between $(2)$ and $(3)$ is just Theorem~\ref{thm:eno}, and the one between $(1)$ and $(2)$ follows from Proposition~\ref{prop:defhex}. 
The commutativity of the diagram follows from Proposition~\ref{prop:serel}. 
\end{proof}

\begin{corollary}\label{cor:surj}
Let $\CEs$ be an extriangulated category which has enough projective objects. Then every closed subfunctor of $\Ebb$ is of the form $\Ebb_{R}^{H}$ for a certain half exact functor $H$. 
\end{corollary}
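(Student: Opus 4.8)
The plan is to read off the corollary directly from the main Theorem~\ref{thm:tri}. Fix a closed subfunctor $\Fbb$ of $\Ebb$. By Theorem~\ref{thm:tri}, the diagonal assignment $H\mapsto\Ebb_{R}^{H}$ is one of the three bijective correspondences relating projectivization functors, Serre subcategories of $\defe\Ebb$, and closed subfunctors: it is the composite of the bijection $H\mapsto\ker\wha{H}$ with the bijection $\Scal\mapsto\Fbb(\Scal)$, hence is itself a bijection onto $\Rel(\Ebb)$. In particular it is surjective, so there exists a projectivization functor $H$ with $\Ebb_{R}^{H}=\Fbb$, and it remains only to observe that a projectivization functor is in particular a half exact functor, which is exactly what the corollary asserts.

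To make this explicit I would first set $\Scal=\defe\Fbb$, which by Theorem~\ref{thm:eno} is the Serre subcategory of $\defe\Ebb$ corresponding to $\Fbb$, so that $\Fbb(\Scal)=\Fbb$. Let $Q\co\defe\Ebb\to(\defe\Ebb)/\Scal$ be the associated Serre quotient functor and put $H:=Q\ci\Ga\ci Y_{\Ccal}$. By Definition~\ref{def:projectivize} this $H$ is a projectivization functor, and it is genuinely half exact: the factor $\Ga\ci Y_{\Ccal}$ is half exact by Proposition~\ref{prop:defhex}(1), and a Serre quotient functor $Q$ is exact, so the composite is half exact. Since $\Ga\ci Y_{\Ccal}$ moreover kills every object of $\Pcal$, so does $H$; thus Proposition~\ref{prop:defhex}(2) applies, and by the uniqueness there one gets $\wha{H}\iso Q$, whence $\ker\wha{H}=\Scal$. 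Feeding this into the commutative diagram of Theorem~\ref{thm:tri} yields $\Ebb_{R}^{H}=\Fbb(\ker\wha{H})=\Fbb(\Scal)=\Fbb$, as required.

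There is no real obstacle here: the corollary is a straightforward unpacking of the surjectivity already built into Theorem~\ref{thm:tri}. The only substantive point to verify is that the witnessing functor $H$ is an honest half exact functor to an abelian category rather than a merely formal gadget, and this follows immediately from Proposition~\ref{prop:defhex}(1) together with the exactness of Serre quotient functors.
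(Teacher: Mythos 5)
Your proof is correct and matches the paper's intent exactly: the paper states Corollary~\ref{cor:surj} as an immediate consequence of the bijections in Theorem~\ref{thm:tri}, with the witnessing $H$ being precisely the projectivization functor $Q\ci\Ga\ci Y_{\Ccal}$ attached to $\Scal=\defe\Fbb$. Your explicit verification that $\wha{H}\iso Q$ via the uniqueness in Proposition~\ref{prop:defhex}(2), and hence $\Ebb_{R}^{H}=\Fbb(\ker\wha{H})=\Fbb$, is exactly the unpacking the paper leaves implicit.
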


\section*{Appendix A. Closed subfunctors and proper classes}\label{app}
\setcounter{section}{1}
\renewcommand{\thesection}{\Alph{section}}
\setcounter{theorem}{0}

In this appendix, we show that the notions of closed subfunctors and proper classes are essentially the same. This is dealt in \cite{HMZZ} for $n$-exangulated categories. For the convenience of the reader, we give the proof in extriangulated categories. Throughout this section, let $\CEs$ be an extriangulated category. The following proposition is required for the definition of proper classes.

\begin{proposition}$($\cite[Proposition 3.15]{NP}\label{prop:shifted}$)$
Let $A_i\ovs{x_i}{\to}B_i\ovs{y_i}{\to}C\ovs{\de_i}{\dra}$ be $\sfra$-triangles for $i=1,2$. Then there exists a commutative diagram
  \begin{equation}\label{shifted}
  \begin{tikzcd}
     \ & A_2 \rar[equal] \dar["m_2"'] & A_2 \dar["x_2"] & \  \\
    A_1 \rar["m_1"]\dar[equal] & M \rar["e_1"]\dar["e_2"] & B_2 \rar[dashed, "\de_{1}y_2"]\dar["y_2"] & \ \\
     A_1\rar["x_1"] & B_1\rar["y_1"]\dar[dashed,"\de_{2}y_1"] & C\dar[dashed,"\de_2"]\rar[dashed, "\de_1"] & \ \\
     \ & \ & \ & \ 
  \end{tikzcd}
  \end{equation}
such that the middle row and the middle column are $\sfra$-triangles and $m_1\de_1+m_2\de_2=0$ holds.  
\end{proposition}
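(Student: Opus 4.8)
The diagram~(\ref{shifted}) is nothing but the homotopy pullback of the two $\sfra$-deflations $y_1\co B_1\to C$ and $y_2\co B_2\to C$ onto the common object $C$, decorated along its outer edges by the two given $\sfra$-triangles. The plan is therefore to construct the middle object $M$ as such a pullback by means of the dual of Proposition~\ref{prop:bicar}, and then to recognise the middle row and the middle column as the base-changes of the two given triangles along $y_2$ and $y_1$ respectively.

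Concretely, I would first apply the dual of Proposition~\ref{prop:bicar} to the $\sfra$-triangle $A_2\ovs{x_2}{\to}B_2\ovs{y_2}{\to}C\ovs{\de_2}{\dra}$ and the morphism $y_1\co B_1\to C$. This produces the object $M$, the middle column $\sfra$-triangle $A_2\ovs{m_2}{\to}M\ovs{e_2}{\to}B_1\ovs{\de_2 y_1}{\dra}$, a comparison map $e_1\co M\to B_2$ with $e_1 m_2=x_2$ and $y_2 e_1=y_1 e_2$, and the $\sfra$-triangle $M\ovs{\bigl(\substack{e_1\\ -e_2}\bigr)}{\to}B_2\opl B_1\ovs{(y_2\, y_1)}{\to}C\ovs{m_2\de_2}{\dra}$. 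Applying the same proposition symmetrically to $A_1\ovs{x_1}{\to}B_1\ovs{y_1}{\to}C\ovs{\de_1}{\dra}$ along $y_2$ yields, on a priori different object $M'$, a candidate middle row $A_1\ovs{m_1}{\to}M'\ovs{e_1'}{\to}B_2\ovs{\de_1 y_2}{\dra}$ together with the $\sfra$-triangle $M'\to B_2\opl B_1\to C\ovs{m_1\de_1}{\dra}$ and the analogous commutativities.

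The main obstacle is to reconcile these two constructions on a single object and to supply the connecting map $m_1\co A_1\to M$; equivalently, to show that the comparison map $e_1\co M\to B_2$ of the first step is itself an $\sfra$-deflation whose kernel is $A_1$ and whose realisation is the middle row with extension $\de_1 y_2$. Both $M$ and $M'$ are realisations of the homotopy pullback of the same cospan $B_2\ovs{y_2}{\to}C\ot B_1$, so the ``weak pullback and weak pushout'' property recorded in Proposition~\ref{prop:bicar} furnishes comparison morphisms over $B_1$ and $B_2$ in both directions. I would upgrade these to a genuine isomorphism by applying {\rm (ET3)} to the two $\sfra$-triangles $M\to B_2\opl B_1\to C\dra$ and $M'\to B_2\opl B_1\to C\dra$, which share the same deflation $(y_2\, y_1)$; transporting the row structure across this isomorphism places the middle row on $M$ and produces $m_1$, while {\rm (ET3)} again ensures that the identity left column $A_1=A_1$ and the identity top row $A_2=A_2$ become honest morphisms of $\sfra$-triangles.

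It then remains to check the commutativity of~(\ref{shifted}), which is routine once all the morphisms of $\sfra$-triangles above are in place, and to establish $m_1\de_1+m_2\de_2=0$. For the latter I would exploit the additivity of the realisation $\sfra$ (Definition~\ref{def:real}): forming the direct-sum triangle $A_2\opl A_1\to B_2\opl B_1\to C\opl C\ovs{\de_2\opl\de_1}{\dra}$ and pulling it back along the diagonal $C\to C\opl C$ reconstructs the very triangle $M\to B_2\opl B_1\to C\dra$, whose extension is computed on the one hand as $m_2\de_2$ and on the other, through the symmetric construction, as $-m_1\de_1$; comparing the two forces the stated identity. I expect the reconciliation onto a single $M$ carrying both triangle structures simultaneously, together with the correct sign in this additivity relation, to be the delicate part of the argument, all other steps being formal consequences of Propositions~\ref{prop:exactseq} and~\ref{prop:bicar} and the axiom {\rm (ET3)}.
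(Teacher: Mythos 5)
The paper itself gives no proof of this proposition: it is quoted directly from \cite[Proposition 3.15]{NP}, so your proposal can only be measured against correctness, not against an internal argument. Your skeleton is viable: the two applications of the dual of Proposition~\ref{prop:bicar} (to $\de_2$ along $y_1$, giving the middle column, $e_1$, and the $\sfra$-triangle $M\ovs{\bigl(\substack{e_1\\ -e_2}\bigr)}{\to}B_2\opl B_1\ovs{(y_2\,y_1)}{\to}C\ovs{m_2\de_2}{\dra}$; and symmetrically to $\de_1$ along $y_2$) are exactly right, and the identification step can indeed be made to work. However, the two places you flag as delicate are precisely where the proposal, as written, fails. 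First, the identification $M\iso M'$: the axiom you invoke is the wrong one --- the two triangles over $B_2\opl B_1$ share their \emph{deflation} $(y_2\,y_1)$, so the comparison morphism $(\phi,\id,\id)$ comes from ${\rm (ET3)}^{\op}$, not {\rm (ET3)} (to use {\rm (ET3)} you would already need a map on the kernel side, which is what you are trying to construct). More seriously, neither ${\rm (ET3)}^{\op}$ nor the weak pullback property of Proposition~\ref{prop:bicar} ``upgrades'' to an isomorphism by itself: these are weak universal properties with no uniqueness, so the comparison maps in the two directions need not compose to identities. The correct repair is to take the single morphism of $\sfra$-triangles $(\phi,\id_{B_2\opl B_1},\id_C)$ supplied by ${\rm (ET3)}^{\op}$ and prove that $\phi$ is an isomorphism by applying the five lemma to the two covariant six-term exact sequences of Proposition~\ref{prop:exactseq}(2) together with the Yoneda lemma.

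Second, the sign. Your proposed mechanism is type-incorrect: pulling $\de_2\opl\de_1$ back along the diagonal $C\to C\opl C$ produces an extension in $\Ebb(C,A_2\opl A_1)$, which cannot be ``computed as $m_2\de_2$'' or as ``$-m_1\de_1$'', since those elements live in $\Ebb(C,M)$; comparing them is meaningless as stated. The actual source of the sign is the swap of summands: rewriting your second triangle over $B_2\opl B_1$ turns its inflation $\bigl(\substack{e_2'\\ -e_1'}\bigr)$ into $\bigl(\substack{-e_1'\\ e_2'}\bigr)$, so the comparison isomorphism satisfies $e_1\phi=-e_1'$ and $e_2\phi=-e_2'$. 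Transporting the row $A_1\ovs{m_1'}{\to}M'\ovs{e_1'}{\to}B_2\ovs{\de_1y_2}{\dra}$ along $\phi$ therefore yields the deflation $-e_1$, and to restore $e_1$ (as the diagram requires) you must set $m_1:=-\phi m_1'$, using the equivalence $[A_1\to M\ovs{-e_1}{\to}B_2]=[A_1\ovs{-\phi m_1'}{\to}M\ovs{e_1}{\to}B_2]$ given by the automorphism $-\id_M$. With this choice, the extension compatibility built into the ${\rm (ET3)}^{\op}$-morphism, namely $(\phi m_1')\de_1=m_2\de_2$, becomes exactly $m_1\de_1=-m_2\de_2$, i.e.\ $m_1\de_1+m_2\de_2=0$, and the commutativity of the left column also drops out: $e_2m_1=-(e_2\phi)m_1'=e_2'm_1'=x_1$. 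With these two repairs your outline does yield a complete proof.
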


\begin{definition}\label{def:proper}
Let $\xi$ be a class of $\sfra$-triangles which is closed under isomorphisms. 
We call $\xi$ a \emph{proper class of $\sfra$-triangles} if it satisfies
\begin{enumerate}
  \item $\xi$ contains all split $\sfra$-triangles, that is, $\sfra$-triangles $A\ovs{f}{\to}B\ovs{g}{\to}C\ovs{\de}{\dra}$ such that $\de=0$. 
  \item $\xi$ is closed under direct sums, that is, for any $\sfra$-triangles $A\ovs{f}{\to}B\ovs{g}{\to}C\ovs{\de}{\dra}$, $A'\ovs{f'}{\to}B'\ovs{g'}{\to}C'\ovs{\de'}{\dra}$ in $\xi$, we have $A\opl A'\ovs{f\opl f'}{\to}B\opl B'\ovs{g\opl g'}{\to}C\opl C'\ovs{\de\opl\de'}{\dra}\in\xi$.
  \item $\xi$ is closed under base change, that is, for any $\sfra$-triangle $A\ovs{f}{\to}B\ovs{g}{\to}C\ovs{\de}{\dra}$ in $\xi$ and any morphism $c\co C'\to C$, we have $A\ovs{f'}{\to}B'\ovs{g'}{\to}C'\ovs{\de c}{\dra}\in\xi$.
  \item $\xi$ is closed under cobase change, dual of {\rm (3)}.
\end{enumerate}
Moreover $\xi$ is \emph{saturated} if it satisfies the following condition: In the diagram (\ref{shifted}), if the third column and the middle row belong to $\xi$, then the third row also belongs to $\xi$. 
\end{definition}

\begin{remark}
In \cite{HZZ}, the term of a class of $\Ebb$-triangles always means saturated one. However, we consider proper classes  without the saturatedness because it has to do with the closedness of subfunctors of $\Ebb$. See the following proposition.
\end{remark}

\begin{proposition}
Let $\CEs$ be an extriangulated category. 
\begin{enumerate}
  \item There exists a bijective correspondence between the class of additive subfunctors of $\Ebb$ and the class of proper classes of $\sfra$-triangles. 
  \item In the above correspondence, closed subfunctors of $\Ebb$ correspond to saturated proper classes of $\sfra$-triangles.
\end{enumerate}  
\end{proposition}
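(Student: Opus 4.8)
The plan is to build explicit mutually inverse assignments and then refine them. For (1), I would send an additive subfunctor $\Fbb$ to the class $\xi_{\Fbb}$ of all $\sfra$-triangles $A\ovs{f}{\to}B\ovs{g}{\to}C\ovs{\de}{\dra}$ with $\de\in\Fbb(C,A)$, and a proper class $\xi$ to the subfunctor $\Fbb_{\xi}$ given by $\Fbb_{\xi}(C,A)=\{\de\in\Ebb(C,A)\mid \text{the }\sfra\text{-triangle of }\de\text{ lies in }\xi\}$; this is well defined because each $\de$ has a single realization up to equivalence and $\xi$ is closed under isomorphism. The heart of this part is a dictionary matching the two sets of axioms: $0\in\Fbb(C,A)$ against axiom (1) (split triangles), closure under $\opl$ against (2), and stability under $c^{\ast}$ and $a_{\ast}$ against base change (3) and cobase change (4). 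The only non-formal point is closure under subtraction, for which I would reuse the identity $\de'-\de=(-\id\ \id)(\de\opl\de')(\substack{\id\\\id})$ from the proof of Proposition~\ref{prop:left}: if $\de,\de'\in\Fbb_{\xi}$ then (2) puts $\de\opl\de'$ in $\xi$ and then (3) and (4) put $\de'-\de$ in $\xi$. Both assignments are governed by the single condition $\de\in\Fbb$, so they are visibly mutually inverse; I would also note that $\xi_{\Fbb}$ is isomorphism-closed, since an isomorphism of $\sfra$-triangles satisfies $a\de=\de'c$ with $a,c$ invertible and $\Fbb$ is a subfunctor.

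For (2) I would invoke Proposition~\ref{prop:closed}: $\Fbb$ is closed iff $\sfra|_{\Fbb}$-deflations are closed under composition. For \emph{closed $\Rightarrow$ saturated}, start from diagram~(\ref{shifted}) with third column ($\de_2$) and middle row ($\de_1 y_2$) in $\xi$, i.e.\ $y_2$ and $e_1$ are $\Fbb$-deflations. By closedness $y_2 e_1$ is an $\Fbb$-deflation, and commutativity of the diagram gives $y_2 e_1=y_1 e_2$; realize this composite as $A_3\to M\ovs{y_1 e_2}{\to}C\ovs{\vare}{\dra}$ with $\vare\in\Fbb$. Applying the dual ${\rm (ET3)}^{\op}$ to this $\vare$-triangle and the third-row triangle $A_1\ovs{x_1}{\to}B_1\ovs{y_1}{\to}C\ovs{\de_1}{\dra}$, with $b=e_2$ and $c=\id_C$ (legal since $y_1 e_2=\id_C\ci(y_1 e_2)$), produces $a\co A_3\to A_1$ with $\de_1=a_{\ast}\vare\in\Fbb$, so the third row lies in $\xi$.

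For \emph{saturated $\Rightarrow$ closed}, take composable $\Fbb$-deflations $p\co X\to Y$ (extension $\al\in\Fbb$) and $q\co Y\to Z$ (extension $\be\in\Fbb$). As $(\Ccal,\Ebb,\sfra)$ is extriangulated, the composite $qp$ is an $\sfra$-deflation, which I realize as $A_1\ovs{x_1}{\to}X\ovs{qp}{\to}Z\ovs{\de_1}{\dra}$; the goal is $\de_1\in\Fbb$. Feeding the $qp$-triangle and the $q$-triangle into Proposition~\ref{prop:shifted} (common target $Z$, $y_1=qp$, $y_2=q$) yields diagram~(\ref{shifted}) whose third column is the $q$-triangle ($\be\in\Fbb$) and whose middle row $A_1\ovs{m_1}{\to}M\ovs{e_1}{\to}Y\ovs{\de_1 q}{\dra}$ carries the extension $\de_1 q$. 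Since its extension is $\de_2 y_1$, the middle column is the base change of the third column along $qp$, so the square $e_1,e_2,q,qp$ is a weak pullback by the dual of Proposition~\ref{prop:bicar}; hence the pair $(\id_X,p)$, which satisfies $(qp)\ci\id_X=q\ci p$, factors through a section $s\co X\to M$ with $e_2 s=\id_X$ and $e_1 s=p$. Applying ${\rm (ET3)}^{\op}$ to the $p$-triangle $K\to X\ovs{p}{\to}Y\ovs{\al}{\dra}$ and the middle-row triangle, with $b=s$ and $c=\id_Y$ (legal since $e_1 s=p$), gives $a\co K\to A_1$ with $\de_1 q=a_{\ast}\al\in\Fbb$. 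Thus the middle row also lies in $\xi$, and saturatedness forces $\de_1\in\Fbb$, so $qp$ is an $\Fbb$-deflation and $\Fbb$ is closed.

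I expect the main obstacle to be the \emph{saturated $\Rightarrow$ closed} direction: one must correctly match the abstract output of Proposition~\ref{prop:shifted} to the concrete maps $p,q$ and verify that the relevant square is exactly the base-change (weak pullback) square of $q$ along $qp$, so that the dual of Proposition~\ref{prop:bicar} delivers the section $s$. After that the two applications of ${\rm (ET3)}^{\op}$ are the crux, and I would take care with the variance in the pushforward relations $a_{\ast}\vare=\de_1$ and $a_{\ast}\al=\de_1 q$, since it is precisely these relations---rather than any direct identification of triangles---that transport membership in $\Fbb$ across the octahedron.
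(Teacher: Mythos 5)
Your part (1) is correct and coincides with the paper's argument: the same dictionary between the axioms, and the same subtraction trick $\de'-\de=(-\id\ \id)(\de\opl\de')\bigl(\substack{\id \\ \id}\bigr)$. Your proof of \emph{closed $\Rightarrow$ saturated} is also correct, and here you genuinely diverge from the paper: the paper disposes of this direction by citing \cite[Lemma 3.14]{HLN}, whereas you give a self-contained argument inside diagram (\ref{shifted}) --- compose the two $\sfra|_{\Fbb}$-deflations $y_2$ and $e_1$, use $y_2e_1=y_1e_2$, and transport the resulting extension $\vare\in\Fbb$ onto $\de_1$ via ${\rm (ET3)}^{\op}$ and the subfunctor property. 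This is a worthwhile by-product, since it makes the appendix independent of that external lemma.

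In \emph{saturated $\Rightarrow$ closed}, however, there is a gap exactly where you produce $s$. You assert that the square formed by $e_1,e_2,q,qp$ in diagram (\ref{shifted}) is a weak pullback ``by the dual of Proposition~\ref{prop:bicar}''. That proposition does not deliver this: applied to the $\be$-triangle and the morphism $qp$, it says that for the given realization of $\be(qp)$ (the middle column) there \emph{exists some} morphism $b\co M\to Y$ completing $(\id,-,qp)$ to a morphism of $\sfra$-triangles whose square $(b,e_2,q,qp)$ is a weak pullback; it says nothing about the particular map $e_1$ supplied by Proposition~\ref{prop:shifted}, and a priori $b\neq e_1$. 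The weak pullback you actually get therefore yields $bs=p$, not the identity $e_1s=p$ that your subsequent application of ${\rm (ET3)}^{\op}$ against the middle-row triangle requires. The claim happens to be true, but proving it needs a correction argument you do not supply: from $q(e_1-b)=0$ and Proposition~\ref{prop:exactseq}(1) for the third column one writes $e_1-b=x_2t$ for some $t\co M\to L$, and then replaces $s$ by $s-m_2ts$, using the relations $e_1m_2=x_2$ and $e_2m_2=0$ from diagram (\ref{shifted}).

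The cleanest repair --- and it is what the paper does --- avoids weak pullbacks altogether. Apply Proposition~\ref{prop:exactseq}(1) to the \emph{middle row} $A_1\ovs{m_1}{\to}M\ovs{e_1}{\to}Y\ovs{\de_1q}{\dra}$: since $(\de_1q)_{\sha}(p)=\de_1(qp)=0$ (because $qp$ is the deflation realizing $\de_1$), exactness of $\Ccal(X,M)\to\Ccal(X,Y)\to\Ebb(X,A_1)$ immediately gives $s\co X\to M$ with $e_1s=p$. Note that the extra condition $e_2s=\id_X$ is never used in your argument, so nothing is lost. With this substitution, the remainder of your proof (the ${\rm (ET3)}^{\op}$ step giving $\de_1q=a\al\in\Fbb$, then saturation forcing $\de_1\in\Fbb$) is exactly the paper's proof, up to notation.
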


\begin{proof}
{\rm (1)} Let $\xi$ be a proper class of $\sfra$-triangles. We define
\[
 \Ebb_{\xi}(C,A):=\{\de\in\Ebb(C,A)\mid A\ovs{f}{\to}B\ovs{g}{\to}C\ovs{\de}{\dra}\in\xi\}
\]
for each $A,C\in\Ccal$, and show that $\Ebb_{\xi}$ is an additive subfunctor of $\Ebb$. Obviously, $\Ebb_{\xi}$ is a subfunctor of $\Ebb$ by Definition~\ref{def:proper} {\rm (3)} and {\rm (4)}. Hence we need to show that $\Ebb_{\xi}(C,A)\se\Ebb(C,A)$ is a subgroup for any $A,C\in\Ccal$. By Definition~\ref{def:proper} {\rm (1)}, zero element $0\in\Ebb(C,A)$ is contained in $\Ebb_{\xi}(C,A)$. Let $\de_1$ and $\de_2$ be any elements in $\Ebb_{\xi}(C,A)$. Since $\de_2-\de_1=(-\id \id)(\de_1\opl\de_2)(\sst{\id \\ \id})$, we have $\de_2-\de_1\in\Ebb_{\xi}(C,A)$ by Definition~\ref{def:proper} {\rm (2), (3)} and {\rm (4)}. Thus $\Ebb_{\xi}(C,A)$ is a subgroup of $\Ebb(C,A)$, hence $\Ebb_{\xi}$ is an additive subfunctor of $\Ebb$.

Conversely, let $\Fbb$ be an additive subfunctor of $\Ebb$. Consider a class $\xi_{\Fbb}$ of $\sfra$-triangles $A\ovs{f}{\to}B\ovs{g}{\to}C\ovs{\de}{\dra}$ satisfying $\de\in\Fbb(C,A)$. We will show that $\xi_{\Fbb}$ is a proper class of $\sfra$-triangles.  First, $\xi_{\Fbb}$ satisfies Definition~\ref{def:proper} {\rm (1)} and {\rm (2)} because $\Fbb$ is additive. Next, $\xi_{\Fbb}$ is closed under isomorphisms and satisfies Definition~\ref{def:proper} {\rm (3)} and {\rm (4)} because $\Fbb$ is a subfunctor of $\Ebb$. Thus $\xi_{\Fbb}$ is a proper class of $\sfra$-triangles. 

It is easy to see that the above correspondence is bijective. 

{\rm (2)} Let $\Fbb$ be a closed subfunctor of $\Ebb$. Then it follows that $\xi_{\Fbb}$ is saturated from \cite[Lemma 3.14]{HLN}. Conversely, let $\xi$ be a saturated proper class of $\sfra$-triangles. We show that $\Ebb_{\xi}$ is a closed subfunctor of $\Ebb$. By Proposition~\ref{prop:closed}, it is enough to show that $\sfra_{\Ebb_{\xi}}$-deflations are closed under compositions. Let $A\ovs{f}{\to}B\ovs{g}{\to}C\ovs{\de}{\dra}$ and $D\ovs{f'}{\to}C\ovs{g'}{\to}E\ovs{\de'}{\dra}$ be $\sfra_{\Ebb_{\xi}}$-triangles. By {\rm (ET4)}$^{\op}$ in $\CEs$, we obtain the following commutative diagram. 
  \[
  \begin{tikzcd}
    A \rar \dar[equal] & F \rar \dar & D \rar[dashed, "\de f'"]\dar["f'"] & \  \\
    A \rar["f"] & B \rar["g"]\dar["g'g"'] & C \rar[dashed, "\de"]\dar["g'"] & \ \\
     \ & E\rar[equal]\dar[dashed,"\rho"] & E\dar[dashed,"\de'"] & \ \\
     \ & \ & \ & \ 
  \end{tikzcd}
  \]
We need to show that $\rho\in\Ebb_{\xi}(E,F)$. By Proposition~\ref{prop:shifted}, we obtain the following commutative diagram.
  \begin{equation}\label{satur}
  \begin{tikzcd}
     \ & D \rar[equal] \dar & D \dar["f'"] & \  \\
    F \rar\dar[equal] & M \rar["e"]\dar & C \rar[dashed, "\rho g'"]\dar["g'"] & \ \\
     F\rar & B\rar["g'g"']\dar[dashed, "0"'] & E\dar[dashed,"\de'"]\rar[dashed, "\rho"] & \ \\
     \ & \ & \ & \ 
  \end{tikzcd}
  \end{equation}
Then there exists a morphism $s\co B\to M$ such that $es=g$ by Proposition~\ref{prop:exactseq} $(1)$. Applying {\rm (ET3)}$^{\op}$ to
  \[
  \begin{tikzcd}
    A \rar["f"] & B \rar["g"] \dar["s"] & C \rar[dashed, "\de"]\dar[equal] & \  \\
    F \rar & M \rar["e"] & C \rar[dashed, "\rho g'"] & \ ,
  \end{tikzcd}
  \]
there exists a morphism $a\co A\to F$ such that $(a, s, \id_C)$ gives a morphism of $\sfra$-triangles. Then we obtain $F\to M\ovs{e}{\to}C\ovs{\rho g'}{\dra}\in\xi$ by Definition~\ref{def:proper} {\rm (4)}. Since $\xi$ is saturated, we have $F\to B\ovs{g'g}{\to}E\ovs{\rho}{\dra}\in\xi$ by the diagram (\ref{satur}).
\end{proof}

\begin{ack}
  The author would like to thank his supervisor Hiroyuki Nakaoka for helpful discussions.
\end{ack}

\end{document}